\theoremstyle{plain}
\newcommand{\newreptheorem}[2]{\newtheorem*{rep@#1}{\rep@title}\newenvironment{rep#1}[1]{\def\rep@title{#2 \ref*{##1}}\begin{rep@#1}}{\end{rep@#1}}}
\newtheorem{theorem}{Theorem}[section]
\newtheorem{lemma}[theorem]{Lemma}
\newtheorem{corollary}[theorem]{Corollary}
\newtheorem{proposition}[theorem]{Proposition}
\theoremstyle{definition}
\newtheorem{definition}[theorem]{Definition}
\title{The braid group $B_3$ in the framework of continued fractions}
\author{Amitesh Datta}
\begin{document}
\maketitle

\abstract{We use the classical interpretation of the braid group $B_3$ as a central extension of the modular group $\text{PSL}_2\left(\mathbb{Z}\right)$ to establish new and fundamental properties of $B_3$ using the theory of continued fractions. In particular, we give simple and natural linear time algorithms to solve the word and conjugacy problems in $B_3$. The algorithms introduced in this paper are easy to implement and are the most efficient algorithms in the literature to solve these problems in the braid group $B_3$.}
\tableofcontents
\section{Introduction}
\subsection{Background and motivation}
The braid groups $B_n$ are classical and important groups in mathematics, explicitly introduced by Artin~\cite{artinbraid} in the 1920s. We briefly review Artin's definition of $B_n$ as the group of geometric $n$-braids. Let $\mathbb{I}$ denote the closed unit interval. A geometric $n$-braid is the isotopy class relative to its boundary of a (one-dimensional) cobordism in $\mathbb{R}^2\times \mathbb{I}$ between $\{\left(1,0\right),\dots,\left(n,0\right)\}$ and $\{\left(1,1\right),\dots,\left(n,1\right)\}$, with the property that the intersection of the cobordism with each cross-section $\mathbb{R}^2\times \{t\}$ for $t\in \mathbb{I}$ consists of $n$ points. A natural group operation of concatenation of cobordisms followed by rescaling can be defined on geometric $n$-braids, and this results in the braid group $B_n$.

The braid groups can also be defined algebraically via the (finite) Artin presentation: \[B_n = \left\langle  \sigma_1,\dots,\sigma_{n-1}:\sigma_i\sigma_{i+1}\sigma_i = \sigma_{i+1}\sigma_i\sigma_{i+1}, \sigma_i\sigma_j = \sigma_j\sigma_i\right\rangle,\] where $1\leq i\leq n-2$ in the first set of relations and $\left|i-j\right|>1$ in the second set of relations. The \textit{word problem} in a finitely generated group is to find an algorithm that decides if a pair of words in the generators represent the same group element. The \textit{conjugacy problem} in a finitely generated group is to find an algorithm that decides if a pair of words in the generators represent conjugate group elements.

The braid groups are fundamental in the theory of knots and links in three-dimensional space. If $g\in B_n$, then the braid closure of $g$ is the isotopy class of the link obtained by connecting the endpoints $\left(i,0\right)$ for $i\in \{1,\dots,n\}$ of $g$ in $\mathbb{R}^2\times \{0\}$ to the corresponding endpoints $\left(i,1\right)$ for $i\in \{1,\dots,n\}$ of $g$ in $\mathbb{R}^2\times \{1\}$ by $n$ unknotted, parallel, simple arcs. Alexander's theorem and Markov's theorem assert that the isotopy class of every link in three-dimensional space is a braid closure, and a pair of braids have the same braid closures if and only if they are related by a finite sequence of conjugations, stabilizations (a stabilization is a map $B_n\to B_{n+1}$ defined by $\sigma\to \sigma\sigma_n^{\pm 1}$) and destabilizations (a destabilization is the inverse of a stabilization, when the inverse exists). In particular, the study of the word and conjugacy problems in the braid groups is an important step toward a complete understanding of knots and links in three-dimensional space.

In this work, we focus on the braid group $B_3$, which already exhibits a rich structure. The modular group \[\text{PSL}_2\left(\mathbb{Z}\right) = \text{SL}_2\left(\mathbb{Z}\right)/\left\langle - I \right\rangle.\] is naturally intertwined with the theory of continued fractions and there is a classical short exact sequence \[1\to \left\langle \Delta^2 \right\rangle \to B_3\to \text{PSL}_2\left(\mathbb{Z}\right)\to 1,\]  where $\Delta^2$ is a generator of the center of $B_3$ ($\Delta\in B_3$ is a distinguished element known as the Garside element of $B_3$). The short exact sequence exhibits $B_3$ as a central extension of $\text{PSL}_2\left(\mathbb{Z}\right)$, and we exploit this interpretation of $B_3$ to establish new and fundamental group-theoretic properties of $B_3$. In particular, we establish simple and natural linear time algorithms to solve the word and conjugacy problems in $B_3$, in terms of continued fractions. The algorithms are easy to implement, and are the most efficient algorithms to solve these problems for $B_3$ in the literature.

We briefly summarize the history of the word and conjugacy problems in the braid groups. The original solution to the word problem in the braid groups is due to Artin, and has exponential running time. The algorithm is based on the interpretation of the braid group $B_n$ as the mapping class group of the $n$-times punctured disk. In his seminal work in the 1960s, Garside~\cite{garside1969braid} established exponential time algorithms to solve the word and conjugacy problems in the braid groups. In the 1980s, Thurston (Chapter 9 of~\cite{thurstonbraid}) established a quadratic time algorithm to solve the word problem in the braid groups, and also an exponential time algorithm to solve the conjugacy problems in the braid groups. Birman-Ko-Lee~\cite{birmankolee} and Franco-Gonz{\'a}lez-Meneses~\cite{francobraidconjugacy} have also established exponential time algorithms to solve the conjugacy problem in the braid groups, and these are currently the most efficient algorithms to solve the conjugacy problem in the braid groups. Finally, we remark that Xu~\cite{xubraid} has studied the word and conjugacy problems specifically in $B_3$.

\subsection{Summary of the main results}
We summarize the main results in this paper. Firstly, we set up the notation and terminology that are necessary to formally state the main results. We will define a set of numerical invariants for braids in $B_3$ that we will use to solve the word and conjugacy problems. The numerical invariants are derived from a map $B_3\to \text{PSL}_2\left(\mathbb{Z}\right)$ (see Subsection~\ref{subsecfses} of Section~\ref{seccf} for a definition of this map).

We consider the Artin presentation \[B_3 = \left\langle \sigma_1,\sigma_2 : \sigma_1\sigma_2\sigma_1 = \sigma_2\sigma_1\sigma_2 \right\rangle.\] Let $g=\prod_{i=1}^{n} \sigma_1^{a_i}\sigma_2^{b_i}\in B_3$ with $a_i,b_i\in \mathbb{Z}$ for $1\leq i\leq n$. Let $\epsilon:B_3\to \mathbb{Z}$ be defined by $\epsilon\left(g\right) = \sum_{i=1}^{n} \left(a_i+b_i\right)$. Let $\mathbb{P}^1$ be the projective line over $\mathbb{Q}$, which can be identified with the set of extended rational numbers $\mathbb{Q}\cup \{\infty\}$. If $c_i\in \mathbb{Z}$ for $0\leq i\leq k$, then \[\left[c_0,c_1,\dots,c_k\right] =  c_0 + \cfrac{1}{c_1 + \cfrac{1}{c_2 + \cfrac{1}{ \ddots + \cfrac{1}{c_n} }}}\] is an extended rational number. Let $\rho_1\left(g\right) = \left[a_1,-b_1,\dots,a_n,-b_n\right]$ and $\rho_2\left(g\right) = \left[a_1,-b_1,\dots,a_n\right]$. We define the map $\rho:B_3\to \left(\mathbb{P}^1\right)^2\times \mathbb{Z}$ by the rule $\rho\left(g\right) = \left(\rho_1\left(g\right),\rho_2\left(g\right),\epsilon\left(g\right)\right)$. 

The map $\rho$ defines a complete set of numerical invariants for braids in $B_3$ and this results in an efficient and natural linear time algorithm to solve the word problem in the braid group $B_3$.

\begin{theorem}
\label{mthmwp}
The map $\rho:B_3\to \left(\mathbb{P}^1\right)^2\times \mathbb{Z}$ is injective. If $g\in B_3$, then $\phi\left(g\right)$ can be computed in running time $\mathcal{O}\left(L\right)$ if the word length of $g$ is $L$. In particular, the word problem in the braid group $B_3$ can be solved in linear time. 
\end{theorem}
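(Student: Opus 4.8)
The plan is to show that the pair of continued-fraction invariants $\rho_1(g),\rho_2(g)$ recovers the image of $g$ in $\text{PSL}_2(\mathbb{Z})$, while the exponent sum $\epsilon(g)$ detects the remaining freedom coming from the center of the central extension. First I would make the map $B_3\to\text{PSL}_2(\mathbb{Z})$ concrete by working with its lift $\psi\colon B_3\to\text{SL}_2(\mathbb{Z})$ given on generators by $\psi(\sigma_1)=\left(\begin{smallmatrix}1&1\\0&1\end{smallmatrix}\right)$ and $\psi(\sigma_2)=\left(\begin{smallmatrix}1&0\\-1&1\end{smallmatrix}\right)$. A direct computation checks the braid relation, so $\psi$ is a well-defined homomorphism, and since $\psi(\Delta)=\left(\begin{smallmatrix}0&1\\-1&0\end{smallmatrix}\right)$ one gets $\psi(\Delta^2)=-I$; hence $\psi$ descends to exactly the map $B_3\to\text{PSL}_2(\mathbb{Z})$ appearing in the short exact sequence, whose kernel is the center $\langle\Delta^2\rangle$.

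The crucial bridge is the following identity. Writing $U(c)=\left(\begin{smallmatrix}c&1\\1&0\end{smallmatrix}\right)$, a short matrix multiplication gives $\psi(\sigma_1^a\sigma_2^b)=U(a)U(-b)$, and therefore for $g=\prod_{i=1}^n\sigma_1^{a_i}\sigma_2^{b_i}$ one has $\psi(g)=U(a_1)U(-b_1)\cdots U(a_n)U(-b_n)$. By the classical matrix recursion for continued-fraction convergents, this product equals $\left(\begin{smallmatrix}p&p'\\q&q'\end{smallmatrix}\right)$, where $p/q=[a_1,-b_1,\dots,a_n,-b_n]=\rho_1(g)$ and $p'/q'=[a_1,-b_1,\dots,a_n]=\rho_2(g)$. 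In other words $\rho_1(g)=\psi(g)\cdot\infty$ and $\rho_2(g)=\psi(g)\cdot 0$ under the M\"obius action, so $\rho_1,\rho_2$ are the images of the two cusps $\infty,0$ under $\psi(g)$; in particular they depend only on $g$, not on the chosen word, which disposes of well-definedness for free.

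From here injectivity should follow cleanly. Convergents are in lowest terms, and $\det\psi(g)=1$ (each $U(\cdot)$ has determinant $-1$ and there is an even number of factors), so the reduced fractions $\rho_1(g),\rho_2(g)$ determine the two columns of $\psi(g)$ up to a common sign, hence determine $\bar\psi(g)\in\text{PSL}_2(\mathbb{Z})$ exactly. Thus if $\rho_1(g)=\rho_1(h)$ and $\rho_2(g)=\rho_2(h)$ then $\bar\psi(g)=\bar\psi(h)$, so by the short exact sequence $g=h\Delta^{2k}$ for some $k\in\mathbb{Z}$. Since the braid relation preserves exponent sum, $\epsilon$ is a well-defined homomorphism, and $\epsilon(\Delta^2)=6$; the hypothesis $\epsilon(g)=\epsilon(h)$ then forces $6k=0$, i.e.\ $k=0$ and $g=h$. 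This gives injectivity of $\rho=(\rho_1,\rho_2,\epsilon)$.

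For the running time I would compute $\rho(g)$ in a single left-to-right pass over the word: maintain the running product of the matrices $U(\cdot)$ together with the running exponent sum, updating both with a constant number of integer operations per syllable $\sigma_1^{a_i}$ or $\sigma_2^{b_i}$, and then read off $\rho_1(g),\rho_2(g)$ as the two column ratios of the final matrix. This uses $\mathcal{O}(L)$ arithmetic operations, and comparing the invariants of two braids is a bounded number of comparisons, yielding the linear-time word problem. The main obstacle is getting the continued-fraction/matrix dictionary of the second paragraph exactly right --- in particular the orientation of which convergent is $\rho_1$ versus $\rho_2$ and the determinant bookkeeping used to recover $\bar\psi(g)$ from two cusp images; once that is in place the rest is formal. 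I would also be careful to state the complexity in the arithmetic (unit-cost) model, since the integer entries of $\psi(g)$ can grow linearly in bit-length.
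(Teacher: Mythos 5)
Your proposal is correct and follows essentially the same route as the paper: interpret $\rho_1(g)$ and $\rho_2(g)$ as the images of the cusps $\infty$ and $0$ under the image of $g$ in $\text{PSL}_2\left(\mathbb{Z}\right)$ (which gives well-definedness), observe that this pair of points determines the element of $\text{PSL}_2\left(\mathbb{Z}\right)$, and then use the fundamental short exact sequence together with $\epsilon\left(\Delta^2\right)=6$ to kill the central ambiguity $\Delta^{2k}$. The only difference is explicitness: your matrix--convergent dictionary and the lowest-terms-plus-determinant recovery of the $\text{PSL}_2\left(\mathbb{Z}\right)$ element from its two cusp images spell out steps that the paper's Lemma~\ref{lemmainjB_3} leaves implicit.
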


We also establish a new normal form for braids in $B_3$. The normal form is a variation of the Garside normal form in $B_3$~\cite{garside1969braid}. Let $\Delta = \sigma_1\sigma_2\sigma_1$ be the Garside element of $B_3$.

\begin{theorem}
\label{mthmnf}
If $g\in B_3$, then we can uniquely write $g = \Delta^{k}\prod_{i=1}^{n} \sigma_1^{a_i}\sigma_2^{b_i}$, where $a_1\geq 0$, $b_n\in \mathbb{Z}$, $a_i> 0$ for $2\leq i\leq n$, and $b_i<0$ for $1\leq i\leq n-1$.
\end{theorem}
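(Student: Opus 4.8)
The plan is to establish both existence and uniqueness of the normal form, treating them as separate tasks. For existence, I would proceed by showing that an arbitrary word $\prod_{i=1}^{m}\sigma_1^{c_i}\sigma_2^{d_i}$ can be massaged into the required shape using the braid relation and the known relation $\Delta \sigma_1 \Delta^{-1} = \sigma_2$ (equivalently $\Delta^2$ is central). The key algebraic facts I would want to exploit are $\sigma_1 \sigma_2 \sigma_1 = \Delta$, so that $\sigma_1^{-1} = \sigma_2\sigma_1\Delta^{-1}$ and $\sigma_2^{-1} = \Delta^{-1}\sigma_1\sigma_2$, and more usefully the ``destabilization'' identities letting one convert a positive $\sigma_2$ exponent or a negative $\sigma_1$ exponent into a power of $\Delta$ times a shorter word. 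Concretely, I expect the normalization to run by an induction that repeatedly pushes powers of $\Delta$ to the left (using centrality of $\Delta^2$ and conjugation to swap $\sigma_1\leftrightarrow\sigma_2$), while simultaneously arranging the signs of the exponents so that all intermediate $a_i$ become strictly positive and all intermediate $b_i$ become strictly negative.

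The cleanest route to existence, however, is likely to piggyback on the invariant $\rho$ and the continued fraction machinery already developed, rather than performing a raw rewriting argument. Since $\rho_1(g) = [a_1,-b_1,\dots,a_n,-b_n]$ and $\rho_2(g) = [a_1,-b_1,\dots,a_n]$ are continued fraction expansions, the sign conditions $a_i>0$ and $b_i<0$ for the interior indices correspond exactly to requiring that the continued fraction be of a canonical (e.g. positive/regular) type. So I would \emph{reduce the normal form statement to a standard uniqueness theorem for continued fraction expansions}: given the pair $(\rho_1(g),\rho_2(g))$ together with the integer $\epsilon(g)$, the constraints pin down the sequence $(a_1,-b_1,\dots,a_n,-b_n)$ uniquely, and the residual freedom is absorbed exactly by the leading power $\Delta^k$. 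The parity/sum data carried by $\epsilon$ should fix $k$, since $\epsilon(\Delta)=3$ and $\epsilon$ is a homomorphism to $\mathbb{Z}$.

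For \textbf{uniqueness}, I would argue directly from Theorem~\ref{mthmwp}: the map $\rho$ is injective, so it suffices to show that two words of the stated normal form with distinct data $(k,(a_i,b_i))$ have distinct images under $\rho$. Given a normal form $g=\Delta^k\prod_{i=1}^n\sigma_1^{a_i}\sigma_2^{b_i}$, I would first read off $\epsilon(g) = 3k + \sum_i(a_i+b_i)$ to recover information about $k$, and then recover the tuple $(a_1,-b_1,\dots,a_n,-b_n)$ from the continued fractions $\rho_1(g)$ and $\rho_2(g)$ by invoking the uniqueness of the \emph{sign-constrained} continued fraction expansion under the conditions $a_1\ge 0$, $a_i>0$, $b_i<0$. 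The main technical point is to verify that the sign hypotheses in the theorem are precisely the conditions guaranteeing a unique continued fraction representative; this is a classical fact about continued fractions with prescribed positivity, and the boundary cases $a_1\ge 0$ (allowing $a_1=0$) and $b_n\in\mathbb{Z}$ unrestricted correspond to the usual endpoint ambiguities that must be handled separately.

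The step I expect to be the main obstacle is \emph{separating the contribution of} $\Delta^k$ \emph{from the contribution of the tail} $\prod_i\sigma_1^{a_i}\sigma_2^{b_i}$, because $\rho_1$ and $\rho_2$ are insensitive to the central factor $\Delta^2$ (it lies in the kernel of $B_3\to\mathrm{PSL}_2(\mathbb{Z})$) and $\Delta$ itself acts on $\mathrm{PSL}_2(\mathbb{Z})$ by a fixed Möbius transformation. Thus $\rho_1(g)$ and $\rho_2(g)$ alone only determine the data up to the action of $\Delta$, and I will need the third coordinate $\epsilon(g)$ together with a careful analysis of how $k \bmod 2$ interacts with the continued fraction to fully pin down both $k$ and the tail simultaneously. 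Once that disambiguation is carried out, injectivity of $\rho$ closes the argument.
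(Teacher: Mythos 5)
Your proposal is correct and follows essentially the same route as the paper: the paper proves this theorem by applying its normal form for $\text{PSL}_2\left(\mathbb{Z}\right)$ (Proposition~\ref{propnorforPSL_2}, i.e.\ existence and uniqueness of sign-constrained simple continued fraction expansions, together with the fundamental domain $\mathbb{Q}\cap\left[0,\infty\right)$ for the involution $\iota$) to $\phi\left(g\right)$, and then lifting through the central extension of Proposition~\ref{propB_3PSL_2}, with the residual power of $\Delta$ fixed exactly as you describe. Your routing of uniqueness through the injectivity of $\rho$, and of the $k\bmod 2$ ambiguity through $\epsilon$ and the action of $\Delta$, is the same argument in only superficially different packaging, since the injectivity of $\rho$ itself rests on the same short exact sequence.
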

 
We now define a map $\mu:B_3\to \mathbb{C}\left(x\right)$ to solve the conjugacy problem in the braid group $B_3$, where $\mathbb{C}\left(x\right)$ denotes the field of rational functions over $\mathbb{C}$ in one variable. Let $g\in B_3$. We define $\rho_{j}^{i}\left(g\right)\in \mathbb{Z}$ to be the numerator of $\rho_j\left(g\right)$ if $i=1$ and the denominator of $\rho_j\left(g\right)$ if $i=2$. (The convention is that $\infty = \frac{1}{0}$.) We define $\text{tr}\left(g\right) = \left|\rho_1^{1}\left(g\right) + \rho_2^{2}\left(g\right)\right|\in \mathbb{Z}$.

If $\text{tr}\left(g\right) = 2$, then we define $s\left(g\right)\in \mathbb{Z}$ as follows. If $\rho_{1}^{1}\left(g\right) = 1$ and $\rho_{1}^{2}\left(g\right) = 0$, then $s\left(g\right) = \rho_{2}^{1}\left(g\right)$. Otherwise, let $u\left(g\right)$ be the greatest common divisor of $\rho_{1}^{1}\left(g\right)-1$ and $\rho_{1}^{2}\left(g\right)$, and choose $r$ and $t$ such that $\left(\rho_{1}^{1}\left(g\right)-1\right)t - \rho_{1}^{2}\left(g\right)r = u\left(g\right)$. Let $s\left(g\right) = u\left(g\right)\left(r + \frac{\rho_{2}^{1}\left(g\right)}{\rho_{1}^{1}\left(g\right)-1}t\right)$ if $\rho_{1}^{1}\left(g\right)\neq 1$, and let $s\left(g\right) = u\left(g\right)\left(r + \frac{\rho_{2}^{2}\left(g\right)-1}{\rho_{1}^{2}\left(g\right)}t\right)$ if $\rho_{1}^{2}\left(g\right)\neq 0$.

If $\text{tr}\left(g\right)>2$, then we define $\kappa\left(g\right)\in \mathbb{R}$ to be the solution to the equation $\rho_1^{2}\left(g\right)x^2 - \left(\rho^{2}_{2}\left(g\right)-\rho_{1}^{1}\left(g\right)\right)x - \rho_2^{1}\left(g\right) = 0$ with $\rho_{1}^{2}\left(g\right)\kappa\left(g\right) + \rho_{2}^{2}\left(g\right)>1$. The real quadratic irrational $\kappa\left(g\right)$ has an (infinite) eventually periodic continued fraction expansion (where the partial quotients are positive integers). The fundamental periods of $\kappa\left(g\right)$ are cyclic shifts of each other, and we can lexicographically order them. Let $\chi\left(g\right) = \left[\overline{c_0,\dots,c_k},\dots\right]$ be the (infinite) purely periodic continued fraction with fundamental period $\left[c_0,\dots,c_k\right]$, where $\left[c_0,\dots,c_k\right]$ is the lexicographically minimal fundamental period of $\kappa\left(g\right)$.

Let $i=\sqrt{-1}$ denote the imaginary unit and let $\omega = e^{2\pi i/3}$ be the cubic root of unity with positive imaginary part. We define $\lambda\left(g\right)$ as follows: \[\lambda\left(g\right) = \begin{cases} 
      i & \text{if } \text{tr}\left(g\right) = 0\\
     \omega &  \text{if } \text{tr}\left(g\right) = 1\text{ and } \rho_{2}^{1}\left(g\right)>0\\
     -\omega & \text{if } \text{tr}\left(g\right) = 1\text{ and } \rho_{2}^{1}\left(g\right)<0\\
     s\left(g\right) & \text{if } \text{tr}\left(g\right) = 2 \\
     \chi\left(g\right) & \text{if } \text{tr}\left(g\right) > 2
   \end{cases}
\] We define $\mu\left(g\right) = \lambda\left(g\right)x^{\epsilon\left(g\right)}$, where $x$ is an indeterminant. The map $\mu:B_3\to \mathbb{C}\left(x\right)$ defines a complete set of class invariants for braids in $B_3$ and this results in a simple linear time algorithm to solve the conjugacy problem in the braid group $B_3$.

\begin{theorem}
\label{mthmc}
The map $\mu:B_3\to \mathbb{C}\left(x\right)$ is a class function on $B_3$. Furthermore, $g,h\in B_3$ are conjugate if and only if $\mu\left(g\right) = \mu\left(h\right)$. If $g\in B_3$, then $\mu\left(g\right)$ can be computed in running time $\mathcal{O}\left(L\right)$ if the word length of $g$ is $L$. In particular, the conjugacy problem in the braid group $B_3$ can be solved in linear time.
\end{theorem}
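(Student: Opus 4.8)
The plan is to reduce the conjugacy problem in $B_3$ to that in $\text{PSL}_2\left(\mathbb{Z}\right)$ by exploiting the central extension $1\to \left\langle \Delta^2\right\rangle \to B_3\to \text{PSL}_2\left(\mathbb{Z}\right)\to 1$, and then to show that $\lambda$ is a complete conjugacy invariant for $\text{PSL}_2\left(\mathbb{Z}\right)$ while $\epsilon$ records the central coordinate. Write $\pi:B_3\to \text{PSL}_2\left(\mathbb{Z}\right)$ for the quotient map. The first observation is that $\epsilon$ is a homomorphism to an abelian group, so it is automatically constant on conjugacy classes, and $\epsilon\left(\Delta^2\right)=6\neq 0$, so $\epsilon$ restricts to an injection on the center $\left\langle \Delta^2\right\rangle$. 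From these two facts I would establish the key reduction: $g,h\in B_3$ are conjugate if and only if $\pi\left(g\right)$ and $\pi\left(h\right)$ are conjugate in $\text{PSL}_2\left(\mathbb{Z}\right)$ and $\epsilon\left(g\right)=\epsilon\left(h\right)$. The forward direction is immediate; for the converse I would lift a conjugator of $\pi\left(g\right),\pi\left(h\right)$ to an element $k\in B_3$, observe that $kgk^{-1}$ and $h$ have the same image under $\pi$ and hence differ by a central element $z\in \left\langle \Delta^2\right\rangle$, and then use $\epsilon\left(h\right)=\epsilon\left(g\right)$ together with the injectivity of $\epsilon$ on the center to force $z=1$.

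Granting the reduction, the equivalence $\mu\left(g\right)=\mu\left(h\right)\iff g\sim h$ splits into matching the $x^{\epsilon\left(g\right)}$ factor with the central coordinate, and matching $\lambda$ with the $\text{PSL}_2\left(\mathbb{Z}\right)$-conjugacy class. I would first identify $\rho_1^1,\rho_1^2,\rho_2^1,\rho_2^2$ with the entries of a lift of $\pi\left(g\right)$ to $\text{SL}_2\left(\mathbb{Z}\right)$: since $\sigma_1,\sigma_2$ map to the standard parabolic generators, the product $\prod_{i}\sigma_1^{a_i}\sigma_2^{b_i}$ maps to a product of elementary matrices whose entries are the continuants of $a_1,-b_1,\dots$, that is, the numerators and denominators of $\rho_1\left(g\right)$ and $\rho_2\left(g\right)$. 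Consequently $\text{tr}\left(g\right)=\left|\rho_1^1\left(g\right)+\rho_2^2\left(g\right)\right|$ is the absolute trace, a conjugacy invariant of $\text{PSL}_2\left(\mathbb{Z}\right)$ that separates the elliptic ($\text{tr}\in\{0,1\}$), parabolic ($\text{tr}=2$) and hyperbolic ($\text{tr}>2$) strata.

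I would then verify completeness of $\lambda$ on each stratum. In the elliptic case there are finitely many classes: $\text{tr}=0$ is the single order-two class (giving $\lambda=i$), while $\text{tr}=1$ consists of two order-three classes, an element and its inverse, which are distinguished by the sign of $\rho_2^1\left(g\right)$ and correspond to $\pm\omega$. In the parabolic case the conjugacy classes are indexed by a single signed integer (the translation length of the unique fixed point), and I would show that $s\left(g\right)$---computed by conjugating the fixed point to $\infty$ via the extended Euclidean algorithm encoded in the gcd $u\left(g\right)$ and the B\'ezout coefficients $r,t$---recovers exactly this integer. The hyperbolic case is the main obstacle and the deepest point: here I would invoke the classical dictionary between hyperbolic conjugacy classes in $\text{PSL}_2\left(\mathbb{Z}\right)$ and purely periodic continued fractions. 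The quantity $\kappa\left(g\right)$ is a fixed point of $\pi\left(g\right)$ acting on $\mathbb{P}^1$, and the normalization $\rho_1^2\left(g\right)\kappa\left(g\right)+\rho_2^2\left(g\right)>1$ selects the reduced fixed point, which by Galois' theorem has a purely periodic continued fraction expansion. Conjugation in $\text{PSL}_2\left(\mathbb{Z}\right)$ cyclically permutes the fundamental period, so the lexicographically minimal period $\chi\left(g\right)$ is a complete conjugacy invariant; establishing this cleanly---including that the multiplier condition isolates a single fixed point consistently across the whole class---is where the care is required.

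Finally, for the complexity statement I would combine the $\mathcal{O}\left(L\right)$ computation of $\rho$ from Theorem~\ref{mthmwp} with the observations that $\text{tr}\left(g\right)$ is then immediate, that $s\left(g\right)$ is a single extended Euclidean computation, and that the eventually periodic continued fraction of $\kappa\left(g\right)$, its fundamental period, and the lexicographically minimal cyclic rotation of that period can all be produced in time linear in $L$.
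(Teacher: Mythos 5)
Your proposal follows essentially the same route as the paper: reduce conjugacy in $B_3$ to conjugacy in $\text{PSL}_2\left(\mathbb{Z}\right)$ together with equality of exponents via the central extension (lifting a conjugator and using that $\epsilon$ is injective on $\left\langle \Delta^2 \right\rangle$), then invoke the trace-stratified classification of conjugacy classes in $\text{PSL}_2\left(\mathbb{Z}\right)$, with continued-fraction fundamental periods handling the hyperbolic case. The only difference is organizational: the paper's proof is short because it delegates the classification and the linear-time computation of $\lambda$ to its previously established Theorem~\ref{thmccPSL_2} and Corollary~\ref{corcPSLlt}, whereas you re-sketch those arguments inline.
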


We also give a new and simpler proof of the following known statement~\cite{allenbyconsep}. A group $G$ is \textit{conjugacy separable} if $g,h\in G$ are conjugate in $G$ whenever $g$ and $h$ are conjugate in all finite quotients of $G$.

\begin{theorem}
\label{mthmcs}
The braid group $B_3$ is conjugacy separable.
\end{theorem}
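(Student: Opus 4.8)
The plan is to leverage the central extension $1 \to \langle \Delta^2\rangle \to B_3 \xrightarrow{\pi} \mathrm{PSL}_2(\mathbb{Z}) \to 1$ together with the exponent-sum homomorphism $\epsilon$ to reduce conjugacy separability of $B_3$ to that of $\mathrm{PSL}_2(\mathbb{Z})$. The conceptual core is the following conjugacy criterion: for $g,h \in B_3$, the elements $g$ and $h$ are conjugate in $B_3$ if and only if $\pi(g)$ and $\pi(h)$ are conjugate in $\mathrm{PSL}_2(\mathbb{Z})$ and $\epsilon(g) = \epsilon(h)$. One direction is immediate, since $\pi$ is a surjection and $\epsilon$ is a homomorphism to the abelian group $\mathbb{Z}$, hence a class function. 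For the converse, if $\pi(h) = \pi(w)\pi(g)\pi(w)^{-1}$ for some $w \in B_3$, then $h(wgw^{-1})^{-1} \in \ker\pi = \langle \Delta^2\rangle$, so $h = \Delta^{2m} wgw^{-1}$ for some $m \in \mathbb{Z}$; comparing exponent sums and using $\epsilon(\Delta^2) = 6 \neq 0$ forces $m = 0$, so $h = wgw^{-1}$. This criterion is precisely the structural fact underlying the complete class invariant $\mu = \lambda\, x^{\epsilon}$, whose first factor records the $\mathrm{PSL}_2(\mathbb{Z})$-conjugacy class and whose exponent records $\epsilon$.

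Granting the criterion, suppose $g, h \in B_3$ are conjugate in every finite quotient of $B_3$; I must show $\epsilon(g) = \epsilon(h)$ and that $\pi(g)$ and $\pi(h)$ are conjugate. For the first, the composite $B_3 \xrightarrow{\epsilon} \mathbb{Z} \twoheadrightarrow \mathbb{Z}/N$ is a finite quotient whose target is abelian, so conjugacy there forces $\epsilon(g) \equiv \epsilon(h) \pmod{N}$ for every $N$, whence $\epsilon(g) = \epsilon(h)$. For the second, every finite quotient $\mathrm{PSL}_2(\mathbb{Z}) \twoheadrightarrow Q$ pulls back along $\pi$ to a finite quotient $B_3 \twoheadrightarrow Q$, so $\pi(g)$ and $\pi(h)$ are conjugate in every finite quotient of $\mathrm{PSL}_2(\mathbb{Z})$. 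Thus the claim reduces entirely to the conjugacy separability of $\mathrm{PSL}_2(\mathbb{Z})$.

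Finally, I would establish conjugacy separability of $\mathrm{PSL}_2(\mathbb{Z}) \cong C_2 * C_3 = \langle s\rangle * \langle t\rangle$ along the trace invariant that organizes $\lambda$, which parallels the free-product normal form. Elements of distinct trace type are separated in a principal congruence quotient $\mathrm{PSL}_2(\mathbb{Z}/N)$ (trace being a mod-$N$ conjugacy invariant), the two order-three classes, distinguished in the paper by the sign of $\rho_2^{1}$, are separated by the abelianization-type quotient $C_2 * C_3 \twoheadrightarrow C_3$, and distinct parabolic classes become non-conjugate unipotents in $\mathrm{PSL}_2(\mathbb{Z}/N)$ for suitable $N$. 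The main obstacle is the hyperbolic case $\mathrm{tr}(g) > 2$, where the conjugacy class is recorded by the lexicographically minimal fundamental period $\chi(g)$: two non-conjugate hyperbolic elements correspond to cyclically reduced free-product words that are not cyclic permutations of one another, and separating such cyclic words in a finite quotient is exactly the content of conjugacy separability for free products of finite groups. I would invoke this classical fact (equivalently, that the virtually free group $\mathrm{PSL}_2(\mathbb{Z})$ is conjugacy separable), observing that the continued fraction description makes the reduction to cyclic words fully explicit. Combining the cases yields conjugacy separability of $\mathrm{PSL}_2(\mathbb{Z})$, and with the criterion above, of $B_3$.
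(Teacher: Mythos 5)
Your proposal is correct and follows essentially the same route as the paper: both reduce the problem via the central extension $1 \to \langle \Delta^2 \rangle \to B_3 \to \mathrm{PSL}_2(\mathbb{Z}) \to 1$, using abelian quotients $\mathbb{Z}/N\mathbb{Z}$ of $\epsilon$ to pin down the power of $\Delta^2$ and invoking the known conjugacy separability of $\mathrm{PSL}_2(\mathbb{Z})$ (the paper cites Stebe; your case-by-case sketch for $\mathrm{PSL}_2(\mathbb{Z})$ likewise ends by invoking the classical free-product fact, so it is not an independent argument). The only cosmetic difference is that you argue directly through the conjugacy criterion while the paper phrases the same steps as a proof by contradiction.
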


\subsection{Outline of the paper}
The outline of the paper is as follows. In Section~\ref{seccf}, we review the groups $\text{SL}_2\left(\mathbb{Z}\right)$ and $\text{PSL}_2\left(\mathbb{Z}\right)$, and the connection between these groups and the theory of continued fractions. We also define the classical map $B_3\to \text{PSL}_2\left(\mathbb{Z}\right)$ that is central in this paper. In Section~\ref{seccfmap}, we establish an explicit linear time algorithm to solve the word problem in the braid group $B_3$. In particular, we associate finite continued fractions to elements of $B_3$ and we show that the continued fractions associated to a braid uniquely determine the braid. We also establish a new normal form for braids in $B_3$. Finally, in Section~\ref{seccpB_3}, we establish an explicit linear time algorithm to solve the conjugacy problem in $B_3$. In particular, we associate numerical class invariants to elements of $B_3$ that include (infinite) purely periodic continued fractions, and we show that the class invariants associated to a braid uniquely determine the conjugacy class of the braid. In the process, we also review the classical theory of conjugacy classes in $\text{SL}_2\left(\mathbb{Z}\right)$ and $\text{PSL}_2\left(\mathbb{Z}\right)$, and its connection to (infinite) eventually periodic continued fractions. We also give a new proof that $B_3$ is conjugacy separable.

\subsection{Acknowledgement}
I would like to express my sincere gratitude to Peter Ozsv{\'a}th for his encouragement during the completion of this work, and his many helpful comments on a previous version of this paper that resulted in a much improved exposition.

\section{The arithmetic of continued fractions and group theory}
\label{seccf}
In this section, we summarize well-known concepts, notation and terminology that we will use throughout the paper. In Subsection~\ref{subsecarithmetic}, we review finite continued fraction expansions of rational numbers and (infinite) periodic continued fraction expansions of real quadratic irrational numbers. In Subsection~\ref{subsecSL_2PSL_2}, we review the groups $\text{SL}_2\left(\mathbb{Z}\right)$ and $\text{PSL}_2\left(\mathbb{Z}\right)$ and their finite presentations, in terms of continued fraction expansions. In Subsection~\ref{subsecfses}, we review the braid group $B_3$ and its classical interpretation as a central extension of $\text{PSL}_2\left(\mathbb{Z}\right)$.

\subsection{Continued fractions}
\label{subsecarithmetic}

We denote the set of integers by $\mathbb{Z}$, we denote the set of rational numbers by $\mathbb{Q}$, and we denote the set of real numbers by $\mathbb{R}$. If $r\in \mathbb{Q}$, then a \textit{continued fraction expansion} of $r$ is an expansion of the form \[r= \left[c_0,c_1,\dots,c_n\right] = c_0 + \cfrac{1}{c_1 + \cfrac{1}{c_2 + \cfrac{1}{ \ddots + \cfrac{1}{c_n} }}}\] where $c_0\in \mathbb{Z}$ and $c_i\in \mathbb{Z}\setminus \{0\}$ for $1\leq i\leq n$. A consequence of the Euclidean algorithm is that every rational number admits a (non-unique) continued fraction expansion. However, every $r\in\mathbb{Q}$ admits a unique continued fraction expansion $\left[c_0,\dots,c_n\right]$ with $c_i>0$ for $1\leq i\leq n$. We refer to this as the \textit{simple continued fraction expansion} of $r$. 

An \textit{infinite continued fraction expansion} of a real number $r\in \mathbb{R}$ is an expansion of the form \[r= \left[c_0,c_1,\dots\right] = c_0 + \cfrac{1}{c_1 + \cfrac{1}{c_2 + \cfrac{1}{ \ddots + }}},\] with $c_i>0$ for $i>0$. We formally define the infinite continued fraction expansion as a limit $r = \lim_{n\to \infty} \left[c_0,c_1,\dots,c_n\right]$. The sequence in the limit is a Cauchy sequence and therefore the limit is well-defined. We remark that every irrational number has a unique infinite continued fraction expansion. An infinite continued fraction expansion is \textit{periodic} if there are nonnegative integers $n'\geq n$ such that $c_{n'+i} = c_{n+i}$ for all nonnegative integers $i$. Euler and Lagrange proved that a real number has a periodic continued fraction expansion if and only if it is a quadratic irrational number (i.e., an irrational number that satisfies a quadratic equation with integer coefficients). We denote a periodic continued fraction expansion by $\left[c_0,\dots,c_{n-1},\overline{c_n,\dots,c_{n'-1}},\dots\right]$, where the vinculum repeats indefinitely.

Let $k$ be minimal such that $\left[c_n,\dots,c_{n+k}\right]$ is a repeating portion of the continued fraction expansion of a real quadratic irrational number $r$. The set $P$ of minimal repeating portions of the periodic continued fraction expansion of $r$ consists of all cyclic shifts of $\left[c_n,\dots,c_{n+k}\right]$, and we can consider the lexicographical ordering on $P$. We refer to the \textit{fundamental period} of $r$ as the smallest element of $P$ with respect to the lexicographical ordering.

\subsection{The groups $\text{SL}_2\left(\mathbb{Z}\right)$ and $\text{PSL}_2\left(\mathbb{Z}\right)$}
\label{subsecSL_2PSL_2}

We recall the classical connection between continued fraction expansions and the groups $\text{SL}_2\left(\mathbb{Z}\right)$ and $\text{PSL}_2\left(\mathbb{Z}\right)$. In this paper, we will use this connection to study the braid group $B_3$ in terms of continued fraction expansions, by considering $B_3$ as a central extension of $\text{PSL}_2\left(\mathbb{Z}\right)$. The material in this subsection is well-known. We refer the reader to Chapter VII of~\cite{serrearithmetic} for a classical exposition of these groups from a number-theoretic perspective.

We recall that \[\text{SL}_2\left(\mathbb{Z}\right) = \left\{\begin{bmatrix} a & b \\ c & d \end{bmatrix}: a,b,c,d\in \mathbb{Z}\text{ and } ad-bc = 1\right\}\] is the group of $2\times 2$ integer matrices with determinant $+1$. The center of $\text{SL}_2\left(\mathbb{Z}\right)$ is the cyclic group $\left\langle -I \right\rangle$ of order $2$ generated by $-I$, where $I$ denotes the $2\times 2$ identity matrix in $\text{SL}_2\left(\mathbb{Z}\right)$. We recall that \[\text{PSL}_2\left(\mathbb{Z}\right) = \text{SL}_2\left(\mathbb{Z}\right)/\left\langle - I \right\rangle.\] 

We denote the affine plane over the field of rational numbers by $\mathbb{A}^2$ and the projective line over the field of rational numbers by $\mathbb{P}^1$. We use the notation $\left[x:y\right]$ to denote homogenous coordinates on $\mathbb{P}^1$. We can naturally identify $\mathbb{P}^1$ with the set of extended rational numbers $\mathbb{Q}\cup \{\infty\}$ by the bijective function $\mathbb{P}^1\to \mathbb{Q}\cup \{\infty\}$ defined in homogenous coordinates by the rule $\left[x:y\right]\to \frac{x}{y}$. We refer to the image of an element of $\mathbb{P}^1$ in $\mathbb{Q}\cup \{\infty\}$ as its affine coordinate. We recall that $\text{PSL}_2\left(\mathbb{Z}\right)$ can also be viewed as the group of integral linear fractional transformations $\mathbb{P}^1\to \mathbb{P}^1$ given in the affine coordinate by $z\to  \frac{az+b}{cz+d}$, where $a,b,c,d\in \mathbb{Z}$. 

The group $\text{SL}_2\left(\mathbb{Z}\right)$ acts on $\mathbb{A}^2$ in a natural manner and this action descends to an action of $\text{PSL}_2\left(\mathbb{Z}\right)$ on $\mathbb{P}^1$. A continued fraction expansion $\left[c_0,\dots,c_n\right]$ induces a function $\mathbb{P}^1\to \mathbb{P}^1$ defined by the rule $z\to \left[c_0,\dots,c_n+z\right]$ (if $z\in\mathbb{Q}$, then $c_n+z$ is not necessarily an integer, but we can still define the continued fraction expansion in the usual manner). We abuse notation and use the continued fraction expansion $\left[c_0,\dots,c_n\right]$ to refer to either this function, or a rational number (the special value of this function at $0$), depending on the context. The functions $\mathbb{P}^1\to \mathbb{P}^1$ obtained in the manner correspond to the elements of $\text{PSL}_2\left(\mathbb{Z}\right)$, and we will now recall their description in terms of a classical generating set for $\text{PSL}_2\left(\mathbb{Z}\right)$.

Let \[S = \begin{bmatrix} 1 & 1 \\ 0 & 1 \end{bmatrix},\text{ } T = \begin{bmatrix} 1 & 0 \\ -1 & 1 \end{bmatrix}.\] We view $S$ and $T$ as elements of $\text{SL}_2\left(\mathbb{Z}\right)$ or $\text{PSL}_2\left(\mathbb{Z}\right)$, depending on context. The finite generation of $\text{SL}_2\left(\mathbb{Z}\right)$ by $S$ and $T$ (and, consequently, the finite generation of the quotient group $\text{PSL}_2\left(\mathbb{Z}\right)$ by $S$ and $T$) is a consequence of the following proposition. We adopt standard arithmetic in the extended real numbers, in particular, the convention that $\frac{1}{0} = \infty$ and $\frac{1}{\infty} = 0$. 

\begin{proposition}
\label{propSL_2fg}
If \[\left(S^{a_1}T^{b_1}\cdots S^{a_n}\right)\begin{bmatrix} x \\ y \end{bmatrix} = \begin{bmatrix} x' \\ y' \end{bmatrix},\] then $\frac{x'}{y'} = \left[a_1,-b_1,\dots,a_{n-1},-b_{n-1},a_n+\frac{x}{y}\right]$. In particular, the groups $\text{SL}_2\left(\mathbb{Z}\right)$ and $\text{PSL}_2\left(\mathbb{Z}\right)$ are generated by $S$ and $T$, and the elements of $\text{PSL}_2\left(\mathbb{Z}\right)$ correspond to functions induced by continued fraction expansions.
\end{proposition}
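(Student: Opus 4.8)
The plan is to prove the displayed continued‑fraction formula first, and then derive both the generation statement and the continued‑fraction‑function correspondence as consequences. The two ingredients I would isolate at the outset are the explicit actions of the generators on affine coordinates: since $S^{a} = \begin{bmatrix} 1 & a \\ 0 & 1 \end{bmatrix}$ and $T^{b} = \begin{bmatrix} 1 & 0 \\ -b & 1 \end{bmatrix}$, a one‑line computation shows that $S^{a}$ sends a point with affine coordinate $w$ to $a + w$, while $T^{b}$ sends it to $\tfrac{1}{-b + 1/w}$. These are exactly the two operations that build a continued fraction one step at a time, in view of the recursion $[c_0, c_1, \dots, c_m] = c_0 + \tfrac{1}{[c_1, \dots, c_m]}$.

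With these in hand I would prove the formula by induction on $n$, using that the $\text{SL}_2\left(\mathbb{Z}\right)$‑action on $\mathbb{A}^2$ descends to an action on $\mathbb{P}^1$ by linear fractional transformations, so that the affine coordinate of a matrix product applied to a vector is the composite of the corresponding transformations. The base case $n=1$ simply records that $S^{a_1}$ sends the affine coordinate $x/y$ to $a_1 + x/y$. For the inductive step I would peel off the leftmost block, writing $S^{a_1}T^{b_1}\cdots S^{a_n} = S^{a_1}T^{b_1}\left(S^{a_2}T^{b_2}\cdots S^{a_n}\right)$; the inductive hypothesis gives the inner product the affine coordinate $w = \left[a_2,-b_2,\dots,a_n+\tfrac{x}{y}\right]$, and applying $T^{b_1}$ then $S^{a_1}$ produces $a_1 + \tfrac{1}{-b_1 + 1/w}$, which equals $\left[a_1,-b_1,a_2,-b_2,\dots,a_n+\tfrac{x}{y}\right]$ by the continued‑fraction recursion. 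This step is pure bookkeeping and is the least delicate part of the argument.

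For generation, I would use that every rational number admits a continued fraction expansion (a consequence of the Euclidean algorithm, already recorded above). Given $M \in \text{SL}_2\left(\mathbb{Z}\right)$ with second column $\begin{bmatrix} r \\ s \end{bmatrix}$, I would expand its affine coordinate $r/s = \left[a_1,-b_1,\dots,a_n\right]$ and set $N = S^{a_1}T^{b_1}\cdots S^{a_n}$; by the formula applied to the point with affine coordinate $0$, the matrices $N$ and $M$ send this point to the same point of $\mathbb{P}^1$, so $N^{-1}M$ stabilizes it. The main point, and what I expect to be the one genuine obstacle, is to promote this projective agreement to an identity of matrices. Here I would compute the stabilizer in $\text{SL}_2\left(\mathbb{Z}\right)$: fixing the point with affine coordinate $0$ forces the $(1,2)$ entry to vanish, and together with $\det = 1$ this forces the matrix to be $\pm\begin{bmatrix} 1 & 0 \\ c & 1 \end{bmatrix} = \pm T^{-c}$. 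Thus $N^{-1}M = \pm T^{k}$, so $M = \pm N T^{k}$; since $-I = \left(STS\right)^2$ is itself a word in $S$ and $T$, it follows that $M$ is a product of powers of $S$ and $T$. (The degenerate case $s=0$, where $r/s = \infty$, I would handle by first composing with $U = STS$, which interchanges $0$ and $\infty$ and whose other column is then finite.)

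Finally, the correspondence with continued‑fraction functions is immediate once generation is established: any word in $S$ and $T$ can be rewritten in the form $S^{a_1}T^{b_1}\cdots S^{a_n}$ by grouping consecutive powers and padding the ends with trivial powers of $S$ if necessary, and the formula then identifies its action on $\mathbb{P}^1$ with the function $z \mapsto \left[a_1,-b_1,\dots,a_n+z\right]$ induced by a continued fraction expansion, as claimed.
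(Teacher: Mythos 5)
Your proof is correct and takes essentially the same route as the paper's: the displayed formula by induction on $n$, followed by an orbit--stabilizer argument at the base point combined with the existence of continued fraction expansions of rationals to obtain generation. The only minor divergence is that the paper runs the stabilizer argument on the vector $\begin{bmatrix} 0 \\ 1 \end{bmatrix}$ in $\mathbb{A}^2$, where the stabilizer is exactly $\left\langle T \right\rangle$, whereas you work projectively on $\mathbb{P}^1$ and therefore must --- and correctly do --- absorb the resulting sign ambiguity via $-I = \left(STS\right)^2$ and treat the coordinate $\infty$ separately; in fact your write-up supplies details (the explicit induction, the stabilizer computation, the sign, the degenerate column) that the paper leaves implicit.
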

\begin{proof}
The first statement is classical and follows by induction on $n$. We briefly recall why this statement implies that $\text{SL}_2\left(\mathbb{Z}\right)$ is generated by $S$ and $T$, to highlight how group-theoretic properties of $\text{SL}_2\left(\mathbb{Z}\right)$ are derived from properties of its faithful action on $\mathbb{A}^2$. The idea is that the stabilizer \[\text{Stab}\left(\begin{bmatrix} 0 \\ 1 \end{bmatrix}\right) = \left\langle T \right\rangle\] is the subgroup generated by $T$, and in particular the following equality of orbits \[\text{SL}_2\left(\mathbb{Z}\right)\begin{bmatrix} 0 \\ 1 \end{bmatrix} = \left\langle S, T \right\rangle \begin{bmatrix} 0 \\ 1 \end{bmatrix}\] would imply that $\text{SL}_2\left(\mathbb{Z}\right) = \left\langle S,T \right\rangle$. However, the equality of orbits follows from the first statement and the existence of a finite continued fraction expansion of every rational number. 
\end{proof}

We now improve Proposition~\ref{propSL_2fg} to establish a normal form for words in $S$ and $T$ using the existence and uniqueness of a simple continued fraction expansion of every rational number. Let $\iota:\mathbb{P}^1\to \mathbb{P}^1$ be the involution $\iota\left(z\right) = -\frac{1}{z}$, which corresponds to the element $STS\in \text{PSL}_2\left(\mathbb{Z}\right)$. 

\begin{proposition}
\label{propnorforPSL_2}
We can write each element of $\text{PSL}_2\left(\mathbb{Z}\right)$ uniquely as a function composition $\phi\circ \left[c_0,\dots,c_k\right]$, where $\phi\in \{I,\iota\}$, $\left[c_0,\dots,c_{k-1}\right]$ is the function induced by a simple continued fraction expansion of a positive rational number, and $c_k\in \mathbb{Z}$.
\end{proposition}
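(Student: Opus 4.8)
The plan is to reduce the entire statement to the action of $\text{PSL}_2\left(\mathbb{Z}\right)$ on the single point $\infty\in\mathbb{P}^1$, using the fact (immediate from the shape of $S$) that the stabilizer of $\infty$ is exactly $\left\langle S\right\rangle$. The starting observation is a direct computation in the function convention of Proposition~\ref{propSL_2fg}: if $F=\left[c_0,\dots,c_k\right]$ is the function $z\mapsto \left[c_0,\dots,c_{k-1},c_k+z\right]$, then $F\left(\infty\right)=\left[c_0,\dots,c_{k-1}\right]$ is precisely the value of the truncated continued fraction, since $c_k+\infty=\infty$ collapses the innermost level. Hence for the proposed normal form $g=\phi\circ\left[c_0,\dots,c_k\right]$, the image $g\left(\infty\right)$ is $\phi$ applied to the positive rational (or to $\infty$, when the simple part is empty) named by the simple continued fraction $\left[c_0,\dots,c_{k-1}\right]$. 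Moreover the free last entry $c_k$ does not affect $g\left(\infty\right)$, because $F\circ S^{m}$ merely replaces $c_k$ by $c_k+m$, and $S^m$ fixes $\infty$.

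For existence, given $g\in\text{PSL}_2\left(\mathbb{Z}\right)$ I would first read off $g\left(\infty\right)\in\mathbb{P}^1$. Since $\iota\left(z\right)=-\frac{1}{z}$ carries the positive rationals and $\infty$ bijectively onto the negative rationals and $0$, the position of $g\left(\infty\right)$ in $\mathbb{P}^1$ forces a unique choice of $\phi$: take $\phi=I$ if $g\left(\infty\right)$ is a positive rational or $\infty$, and $\phi=\iota$ otherwise. The simple continued fraction expansion of $g\left(\infty\right)$ (respectively of $\iota\left(g\left(\infty\right)\right)=-\frac{1}{g\left(\infty\right)}$) then supplies the constrained entries $c_0,\dots,c_{k-1}$, whose existence is guaranteed by Subsection~\ref{subsecarithmetic}. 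Setting $h=\phi\circ\left[c_0,\dots,c_{k-1},0\right]$, one has $h\left(\infty\right)=g\left(\infty\right)$, so $h^{-1}g$ fixes $\infty$ and therefore lies in $\left\langle S\right\rangle$; writing $h^{-1}g=S^{m}$ and absorbing $S^{m}$ into the last slot gives $g=\phi\circ\left[c_0,\dots,c_{k-1},m\right]$, so that $c_k=m$.

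For uniqueness, suppose $\phi\circ\left[c_0,\dots,c_k\right]=\phi'\circ\left[c_0',\dots,c_{k'}'\right]$. Evaluating at $\infty$ yields $\phi\left(\left[c_0,\dots,c_{k-1}\right]\right)=\phi'\left(\left[c_0',\dots,c_{k'-1}'\right]\right)$; as the two target sets (positive rationals together with $\infty$, versus negative rationals together with $0$) are disjoint, this forces $\phi=\phi'$ and then equality of the two underlying positive rationals. Uniqueness of the simple continued fraction expansion now forces $k=k'$ and $c_i=c_i'$ for $i<k$, and cancelling this common prefix reduces the equality to $S^{c_k}=S^{c_{k'}'}$, whence $c_k=c_{k'}'$.

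The step requiring the most care is the uniqueness of the simple continued fraction expansion itself: the classical ambiguity $\left[c_0,\dots,c_n\right]=\left[c_0,\dots,c_n-1,1\right]$ must be eliminated by the convention fixed in Subsection~\ref{subsecarithmetic}, and it is exactly that normalization which is transported into the uniqueness of the present normal form. The remaining delicate points are pure boundary bookkeeping: the degenerate values $g\left(\infty\right)\in\{0,\infty\}$ correspond to an empty simple part ($k=0$), and I would verify that the positive-rational and negative-rational regimes, together with these two boundary values, tile $\mathbb{P}^1$ exactly once, so that $\phi$ and the simple part are genuinely determined by $g\left(\infty\right)$ before the last entry is fixed by the $\left\langle S\right\rangle$-coset.
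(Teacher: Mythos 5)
Your overall strategy is sound, and it is essentially the paper's own argument: an orbit--stabilizer decomposition, the observation that the positive rationals together with $\infty$ are a fundamental domain for $\{I,\iota\}$, and uniqueness of simple continued fraction expansions. (The paper runs the same mechanism at the basepoint $0$, whose stabilizer is $\langle T\rangle$, in the word form $\phi S^{a_1}T^{b_1}\cdots S^{a_n}T^{b_n}$; your version at the basepoint $\infty$, with stabilizer $\langle S\rangle$, actually matches the statement's packaging more directly.) However, there is a genuine gap at the pivotal step ``$h^{-1}g$ fixes $\infty$ and therefore lies in $\langle S\rangle$.''

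The gap is a determinant (equivalently, parity) constraint that you never impose. The function $\left[c_0,\dots,c_k\right]:z\mapsto\left[c_0,\dots,c_k+z\right]$ is a composition of integer translations and $k$ copies of the inversion $w\mapsto \frac{1}{w}$, so it is an integral linear fractional transformation of determinant $\left(-1\right)^k$; since the sign of the determinant is an invariant of a fractional linear transformation, $\left[c_0,\dots,c_k\right]$ lies in $\text{PSL}_2\left(\mathbb{Z}\right)$ only when $k$ is even. Consequently your $h=\phi\circ\left[c_0,\dots,c_{k-1},0\right]$ is in the group only when the simple continued fraction extracted from $g\left(\infty\right)$ has even length $k$. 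If it has odd length, then $h^{-1}g$ is an integral transformation of determinant $-1$ fixing $\infty$, i.e.\ of the form $z\mapsto -z+c$, which is not a power of $S$, and the step fails. Concretely, $g\left(z\right)=\frac{3z+2}{z+1}$ has $g\left(\infty\right)=3=\left[3\right]$, giving $h\left(z\right)=3+\frac{1}{z}$ and $h^{-1}g\left(z\right)=-z-1$. The repair is the opposite of what your final paragraph proposes: the ambiguity $\left[c_0,\dots,c_n\right]=\left[c_0,\dots,c_n-1,1\right]$ must be exploited, not eliminated. Every positive rational has exactly two simple expansions, one of each parity, and you must always take the even-length one (here $3=\left[2,1\right]$, which gives $h=g$ and $h^{-1}g=S^0$). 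Uniqueness is likewise not rescued by the normalization convention of Subsection~\ref{subsecarithmetic}, but by the same parity observation: in any valid representation $\phi\circ\left[c_0,\dots,c_k\right]$ of an element of $\text{PSL}_2\left(\mathbb{Z}\right)$, the determinant forces $k$ to be even, and even-length simple expansions of a given positive rational are unique. With this parity bookkeeping inserted (and with the convention that the empty prefix, of even length $0$, is what occurs when $g\left(\infty\right)\in\{0,\infty\}$), your argument becomes correct and complete.
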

\begin{proof}
Note that $\mathbb{Q}\cap \left[0,\infty\right)$ is a fundamental domain for the action of the cyclic group $\{I,\iota\}$ on $\mathbb{Q}\cup \{\infty\}$, i.e., every orbit of the action contains a unique point of $\mathbb{Q}\cap \left[0,\infty\right)$. Proposition~\ref{propSL_2fg} implies that every element of $\text{PSL}_2\left(\mathbb{Z}\right)$ can be written uniquely in the form $\phi S^{a_1}T^{b_1}\cdots S^{a_{n-1}}T^{b_{n-1}}S^{a_n}T^{b_n}$, where $\phi\in \{I,STS\}$, $\left[a_1,-b_1,\dots,a_{n-1},-b_{n-1},a_n\right]$ is a simple continued fraction expansion with $a_1\geq 0$, and $b_n\in \mathbb{Z}$. Indeed, this corresponds to the existence and uniqueness of a simple continued fraction expansion with nonnegative first term of every nonnegative rational number. 
\end{proof}

Proposition~\ref{propnorforPSL_2} implies that the word problem in $\text{PSL}_2\left(\mathbb{Z}\right)$ with respect to the generating set $\{S,T\}$ admits a simple solution. In fact, the following finite presentation of $\text{PSL}_2\left(\mathbb{Z}\right)$ is well-known. 

\begin{proposition}
\label{propfpPSL_2}
We have an isomorphism of groups \[\text{PSL}_2\left(\mathbb{Z}\right) = \left\langle S,T : STS = TST,\text{ }\left(ST\right)^3 = I \right\rangle.\]
\end{proposition}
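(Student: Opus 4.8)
The plan is to exhibit the stated presentation by constructing a surjection from the abstractly presented group onto $\text{PSL}_2\left(\mathbb{Z}\right)$ and then proving injectivity via the normal form of Proposition~\ref{propnorforPSL_2}. First I would let $G = \left\langle S,T : STS = TST, \left(ST\right)^3 = I \right\rangle$ denote the abstractly presented group and verify by direct matrix multiplication that the generators $S$ and $T$ of $\text{PSL}_2\left(\mathbb{Z}\right)$ satisfy both relations: one computes $STS = TST = \begin{bmatrix} 0 & 1 \\ -1 & 0 \end{bmatrix}$ and $\left(ST\right)^3 = \begin{bmatrix} -1 & 0 \\ 0 & -1 \end{bmatrix} = -I$, which is the identity in $\text{PSL}_2\left(\mathbb{Z}\right)$. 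By the universal property of presentations this yields a homomorphism $\pi:G\to \text{PSL}_2\left(\mathbb{Z}\right)$, and $\pi$ is surjective because $S$ and $T$ generate $\text{PSL}_2\left(\mathbb{Z}\right)$ by Proposition~\ref{propSL_2fg}. It then suffices to prove that $\pi$ is injective.

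For injectivity I would argue that the two defining relations already suffice to rewrite every element of $G$ as a word of the shape appearing in Proposition~\ref{propnorforPSL_2}, namely $\phi\, S^{a_1}T^{b_1}\cdots S^{a_n}T^{b_n}$ with $\phi\in \{I,STS\}$, with $\left[a_1,-b_1,\dots,a_n\right]$ a simple continued fraction expansion with $a_1\geq 0$, and with $b_n\in \mathbb{Z}$. Granting this reduction, injectivity is immediate: if $w\in G$ satisfies $\pi\left(w\right) = I$, rewrite $w$ in $G$ as such a normal-form word $v$; then $\pi\left(v\right) = I$, and since Proposition~\ref{propnorforPSL_2} asserts that distinct normal-form words represent distinct elements of $\text{PSL}_2\left(\mathbb{Z}\right)$, the word $v$ must be the trivial normal form, whence $w=1$ in $G$. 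Note that I do not need uniqueness of the reduction inside $G$; only its existence is required, together with the uniqueness supplied by Proposition~\ref{propnorforPSL_2} downstairs.

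The reduction itself is carried out with two consequences of the defining relations. From $\left(ST\right)^3 = I$ one derives that $\iota := STS$ is an involution, since $\iota^2 = \left(STS\right)\left(TST\right) = \left(ST\right)^3 = I$ after applying $STS = TST$ to the second factor; the same relation lets one eliminate the inverse generators $S^{-1}$ and $T^{-1}$ in favour of positive words. The braid relation $STS = TST$ plays the role of the sign-reduction step in the Euclidean algorithm, implementing the continued-fraction identity \[ n - \frac{1}{m} = \left(n-1\right) + \cfrac{1}{1 + \cfrac{1}{m-1}} \] that converts negative partial quotients into positive ones and absorbs the leading involution into the factor $\phi$. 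I would then induct on a complexity measure such as $\sum_i \left(\left|a_i\right| + \left|b_i\right|\right)$, checking that each elementary move strictly decreases it and terminates at the normal form. As a structural sanity check, the substitution $x = ST$, $y = STS$ is a Tietze transformation turning the presentation into $\left\langle x,y : x^3 = 1,\ y^2 = 1 \right\rangle$, exhibiting $G\cong \mathbb{Z}/3 * \mathbb{Z}/2$; this confirms that no relations have been lost and identifies the two defining relations as recording the orders of the elliptic elements $ST$ and $STS$.

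The main obstacle is precisely the completeness of this rewriting: showing that the two given relations, and not merely some larger set, suffice to bring an arbitrary word to the Proposition~\ref{propnorforPSL_2} normal form, equivalently, that every relation in $\text{PSL}_2\left(\mathbb{Z}\right)$ among $S$ and $T$ is a consequence of $STS = TST$ and $\left(ST\right)^3 = I$. I would handle this by matching each Euclidean-algorithm reduction step to an explicit word identity derived from the two relations and verifying that the complexity strictly drops, so that the process mirrors the construction of the simple continued fraction expansion underlying Proposition~\ref{propnorforPSL_2}.
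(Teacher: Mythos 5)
The paper offers no proof of this proposition: it records the presentation as well-known, remarks that the relations encode continued-fraction identities, and points to the ping-pong lemma (II.B of~\cite{topicsggtpierre}) as the traditional argument. Your proposal is therefore a genuinely different route, and its architecture is sound. The surjectivity step is correct (your matrix computations $STS=TST=\begin{bmatrix} 0 & 1 \\ -1 & 0\end{bmatrix}$ and $\left(ST\right)^3=-I$ check out), as are the auxiliary derivations: $\iota^2 = \left(STS\right)\left(TST\right)=\left(ST\right)^3 = I$, the eliminations $S^{-1}=TSTST$ and $T^{-1}=STSTS$, and the Tietze substitution $x=ST$, $y=STS$ under which the braid relation becomes $y^2=x^3$ and the presentation becomes $\left\langle x,y : x^3=1,\ y^2=1\right\rangle$. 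Your injectivity scheme — rewrite any word of $G$ into the shape of Proposition~\ref{propnorforPSL_2} using only the two relations, then invoke the uniqueness half of that proposition downstairs — is the standard van der Waerden normal-form method, and it is non-circular within the paper's logic, since Proposition~\ref{propnorforPSL_2} is proved from Proposition~\ref{propSL_2fg} and continued-fraction uniqueness, not from the presentation. What your route buys is a proof entirely in the paper's own idiom, with the Euclidean algorithm doing the group theory; what ping-pong buys is brevity, since the free-product decomposition $\mathbb{Z}/2\ast\mathbb{Z}/3$ falls out of the action on $\mathbb{P}^1$ in a few lines with no rewriting combinatorics. Two cautions. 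First, essentially all of the real work lives in the step you defer: exhibiting, for each Euclidean sign-reduction step (your identity $n-\frac{1}{m}=\left(n-1\right)+\cfrac{1}{1+\cfrac{1}{m-1}}$ in word form), an explicit derivation from the two relations, and proving that your complexity measure strictly decreases so the process terminates; this is routine but lengthy, and the proposition is not proved until it is done. Second, your structural sanity check $G\cong \mathbb{Z}/3\ast\mathbb{Z}/2$ identifies only the abstract group $G$ and says nothing about $\text{PSL}_2\left(\mathbb{Z}\right)$ by itself — as you correctly indicate, it cannot replace either the rewriting argument or ping-pong.
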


We remark that the relations in the presentation of $\text{PSL}_2\left(\mathbb{Z}\right)$ correspond to equalities of continued fraction expansions for all $z\in \mathbb{Q}\cup \{\infty\}$. For example, the relation $STS = TST$ corresponds to the equality $\left[1,-1,1,z\right] = \left[-1,1,-1,z\right]$ for all $z\in \mathbb{Q}\cup \{\infty\}$. Similarly, the relation $\left(ST\right)^3 = I$ corresponds to the equality $\left[1,-1,1,-1,1,-1,z\right] = z$ for all $z\in \mathbb{Q}\cup \{\infty\}$.

Proposition~\ref{propfpPSL_2} implies that $\text{PSL}_2\left(\mathbb{Z}\right)$ is isomorphic to the free product $\mathbb{Z}/2\mathbb{Z}\ast \mathbb{Z}/3\mathbb{Z}$. Indeed, $STS$ generates a copy of $\mathbb{Z}/2\mathbb{Z}$ in $\text{PSL}_2\left(\mathbb{Z}\right)$ and $ST$ generates a copy of $\mathbb{Z}/3\mathbb{Z}$ in $\text{PSL}_2\left(\mathbb{Z}\right)$. Furthermore, $\text{PSL}_2\left(\mathbb{Z}\right)$ is generated by $STS$ and $ST$ with these relations. Proposition~\ref{propfpPSL_2} also implies the classical statement that $\text{SL}_2\left(\mathbb{Z}\right)$ admits the finite presentation \[\text{SL}_2\left(\mathbb{Z}\right) = \left\langle S,T : STS = TST,\text{ }\left(ST\right)^6 = I \right\rangle.\] A traditional proof of these finite presentations is based on the ping-pong lemma, which gives a criterion for a group to be a free product of subgroups in terms of the actions of the group and its subgroups on a set (see II.B of~\cite{topicsggtpierre}).

\subsection{The fundamental short exact sequence}
\label{subsecfses}

We recall the algebraic definition of the braid group $B_3$ introduced by Artin. 

\begin{definition} 
\label{defB_3}
The braid group $B_3$ is defined by the Artin presentation: \[B_3 = \left\langle \sigma_1,\sigma_2:\sigma_1\sigma_2\sigma_1=\sigma_2\sigma_1\sigma_2\right\rangle.\] We refer to an element of $B_3$ as a braid, and we represent braids as words in the generators $\sigma_1$ and $\sigma_2$ (and their inverses). The Garside element of $B_3$ is $\Delta = \sigma_1\sigma_2\sigma_1$. 
\end{definition}

Let us define $\phi:B_3\to \text{PSL}_2\left(\mathbb{Z}\right)$ in terms of generators in the Artin presentation of $B_3$ by the rules $\phi\left(\sigma_1\right) = S$ and $\phi\left(\sigma_2\right) = T$. Proposition~\ref{propfpPSL_2} implies that $\phi$ is a well-defined surjective homomorphism.

\begin{proposition}
\label{propB_3PSL_2}
The sequence of maps \[1\to \left\langle \Delta^2 \right\rangle \to B_3\to \text{PSL}_2\left(\mathbb{Z}\right)\to 1\] is a short exact sequence. The center of $B_3$ is $\left\langle \Delta^2 \right\rangle$ and the short exact sequence above describes $B_3$ as a central extension of $\text{PSL}_2\left(\mathbb{Z}\right)$. 
\end{proposition}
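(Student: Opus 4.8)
The plan is to verify exactness term by term, with the substantive content being the identification $\ker\phi = \langle\Delta^2\rangle$, and then to deduce the statement about the center from the triviality of the center of $\text{PSL}_2(\mathbb{Z})$. Surjectivity and well-definedness of $\phi$ are already recorded in the discussion preceding the proposition (via Proposition~\ref{propfpPSL_2}), and injectivity of the inclusion $\langle\Delta^2\rangle\hookrightarrow B_3$ is automatic, so the only exactness condition requiring proof is $\ker\phi = \langle\Delta^2\rangle$.

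First I would establish the two inclusions. For $\langle\Delta^2\rangle\subseteq\ker\phi$, I would use the braid relation to rewrite $\Delta^2 = (\sigma_1\sigma_2\sigma_1)(\sigma_2\sigma_1\sigma_2) = (\sigma_1\sigma_2)^3$, whence $\phi(\Delta^2) = (ST)^3 = I$ by the relation $(ST)^3 = I$ of Proposition~\ref{propfpPSL_2}. For the reverse inclusion I would argue at the level of presentations: under the identification $S\leftrightarrow\sigma_1$, $T\leftrightarrow\sigma_2$, the presentation of $\text{PSL}_2(\mathbb{Z})$ in Proposition~\ref{propfpPSL_2} is obtained from the Artin presentation of $B_3$ by imposing the single additional relator $(\sigma_1\sigma_2)^3$. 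Consequently $\ker\phi$ is precisely the normal closure in $B_3$ of the image of this relator, namely the normal closure of $\Delta^2$.

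The crux is therefore to show that this normal closure is just the cyclic subgroup $\langle\Delta^2\rangle$, and this follows once I show $\Delta^2$ is central in $B_3$. Here I would use the braid relation to compute $\Delta\sigma_1 = \sigma_1\sigma_2\sigma_1\sigma_1 = \sigma_2\sigma_1\sigma_2\sigma_1 = \sigma_2\Delta$, and symmetrically $\Delta\sigma_2 = \sigma_1\Delta$; thus conjugation by $\Delta$ interchanges $\sigma_1$ and $\sigma_2$, and conjugation by $\Delta^2$ fixes each generator, so $\Delta^2$ commutes with all of $B_3$. Since a central element generates a normal subgroup, the normal closure of $\Delta^2$ equals $\langle\Delta^2\rangle$, completing the proof that $\ker\phi = \langle\Delta^2\rangle$. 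That $\Delta^2$ has infinite order (so the kernel is infinite cyclic and the sequence genuinely exhibits a central extension) follows from $\epsilon(\Delta^2) = 6 \neq 0$.

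Finally, for the center, the inclusion $\langle\Delta^2\rangle\subseteq Z(B_3)$ is the centrality just proved. For the reverse inclusion, if $z\in Z(B_3)$ then $\phi(z)$ commutes with every element of $\text{PSL}_2(\mathbb{Z})$ by surjectivity of $\phi$; but $\text{PSL}_2(\mathbb{Z})\cong\mathbb{Z}/2\mathbb{Z}\ast\mathbb{Z}/3\mathbb{Z}$ is a nontrivial free product and hence has trivial center, so $\phi(z) = I$ and $z\in\ker\phi = \langle\Delta^2\rangle$. I expect the main obstacle to be the reverse inclusion $\ker\phi\subseteq\langle\Delta^2\rangle$; the presentation-comparison reduces it to a question about the normal closure of $\Delta^2$, and the decisive simplification is the observation that $\Delta^2$ is central, which collapses that normal closure to a cyclic group.
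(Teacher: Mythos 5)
Your proof is correct and follows essentially the same route as the paper's: identify $\ker\phi$ with the normal closure of $\Delta^2$ via the presentation comparison of Proposition~\ref{propfpPSL_2}, and use the centrality of $\Delta^2$ to collapse that normal closure to the cyclic subgroup $\left\langle \Delta^2 \right\rangle$. The paper's proof is just a compressed version of this argument; you additionally spell out details it leaves implicit, such as the explicit conjugation computation $\Delta\sigma_1\Delta^{-1} = \sigma_2$ and the identification of the full center via the triviality of the center of $\text{PSL}_2\left(\mathbb{Z}\right) \cong \mathbb{Z}/2\mathbb{Z}\ast\mathbb{Z}/3\mathbb{Z}$, which are welcome additions rather than deviations.
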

\begin{proof}
The statement follows from Proposition~\ref{propfpPSL_2}, since $\phi\left(\Delta\right) = STS$. Indeed, $\Delta^2$ is central in $B_3$, and thus the subgroup of $B_3$ normally generated by $\Delta^2$ equals the subgroup of $B_3$ generated by $\Delta^2$. 
\end{proof}

In this paper, we refer to the short exact sequence in Proposition~\ref{propB_3PSL_2} as the \textit{fundamental short exact sequence}.

\section{The word problem in $B_3$}
\label{seccfmap}
In this section, we use the interpretation of the braid group $B_3$ as a central extension of the modular group $\text{PSL}_2\left(\mathbb{Z}\right)$ (Proposition~\ref{propB_3PSL_2}) to define an injective map $\rho:B_3\to \left(\mathbb{P}^1\right)^2\times \mathbb{Z}$, where $\mathbb{P}^1 = \mathbb{Q}\cup \{\infty\}$ can be identified with the set of extended rational numbers. If $g\in B_3$ is represented as a word in the Artin generators, then $\rho\left(g\right)$ can be defined in terms of the word using simple continued fractions. In particular, this results in an effective linear time algorithm to solve the word problem in $B_3$.

We begin by defining the map $\rho:B_3\to \left(\mathbb{P}^1\right)^2\times \mathbb{Z}$ based on the map $\phi:B_3\to \text{PSL}_2\left(\mathbb{Z}\right)$ in Proposition~\ref{propB_3PSL_2}.

\begin{definition}
If $g\in B_3$, then write $g = \prod_{i=1}^{n} \sigma_1^{a_i}\sigma_2^{b_i}$ as a word in the Artin generators. We define $\rho_1\left(g\right) = \left[a_1,-b_1,\dots,a_{n-1},-b_{n-1},a_n,-b_n\right]\in \mathbb{P}^1$ and $\rho_2\left(g\right)=\left[a_1,-b_1,\dots,a_n\right]\in \mathbb{P}^1$. We define the exponent of $g$ to be $\epsilon\left(g\right) = \sum_{i=1}^{n} \left(a_i+b_i\right)$. The map $\rho:B_3\to \left(\mathbb{P}^1\right)^2\times \mathbb{Z}$ is defined by the rule $\rho\left(g\right) = \left(\rho_1\left(g\right),\rho_2\left(g\right),\epsilon\left(g\right)\right)$.
\end{definition}

We can view $\rho_1\left(g\right)$ and $\rho_2\left(g\right)$ as special values $\left[a_1,-b_1,\dots,a_n,-b_n\right]\left(\infty\right)$ and $\left[a_1,-b_1,\dots,a_n,-b_n\right]\left(0\right)$, respectively, of the element $\phi\left(g\right)\in \text{PSL}_2\left(\mathbb{Z}\right)$ induced by the continued fraction expansion $\left[a_1,-b_1,\dots,a_n,-b_n\right]$ (see Subsection~\ref{subsecSL_2PSL_2}). We can view $\epsilon\left(g\right)$ as the image of $g$ under the abelianization homomorphism $\epsilon:B_3\to \mathbb{Z}$, where $\epsilon\left(\sigma_1\right) = 1 = \epsilon\left(\sigma_2\right)$. The interpretations of these extended rational numbers associated to a word representing $g$ show that they are independent of the choice of such a word, and in particular we have the following statement.

\begin{lemma}
\label{lemmainjB_3}
The map $\rho:B_3\to \left(\mathbb{P}^1\right)^2\times \mathbb{Z}$ defined by the rule $\rho\left(g\right) = \left(\rho_1\left(g\right),\rho_2\left(g\right),\epsilon\left(g\right)\right)$ is a well-defined injective map.
\end{lemma}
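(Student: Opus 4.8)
The plan is to treat well-definedness as essentially formal and to concentrate on injectivity, which I would reduce to the fundamental short exact sequence of Proposition~\ref{propB_3PSL_2}. The guiding observation is that the triple $\rho(g) = (\rho_1(g), \rho_2(g), \epsilon(g))$ merely repackages two pieces of data already known to be well-defined on $B_3$: the image $\phi(g) \in \text{PSL}_2(\mathbb{Z})$ and the exponent sum $\epsilon(g) \in \mathbb{Z}$. I would then show that $(\rho_1, \rho_2)$ recovers $\phi(g)$ and that $\epsilon$ separates the fibers of $\phi$, which together force injectivity.

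For well-definedness, I would first note that $\epsilon: B_3 \to \mathbb{Z}$ is the abelianization homomorphism sending each generator to $1$; it respects the braid relation because both $\sigma_1\sigma_2\sigma_1$ and $\sigma_2\sigma_1\sigma_2$ have exponent sum $3$, so $\epsilon(g)$ is independent of the chosen word. For $\rho_1$ and $\rho_2$, I would invoke Proposition~\ref{propSL_2fg} to identify them as the images of $\infty$ and $0$ under the linear fractional transformation $\phi(g)$; concretely, if $\phi(g)$ is represented by $\begin{bmatrix} a & b \\ c & d \end{bmatrix}$, then $\rho_1(g) = \frac{a}{c}$ and $\rho_2(g) = \frac{b}{d}$. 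Since $\phi$ is a well-defined homomorphism (Proposition~\ref{propB_3PSL_2}) and the linear fractional action is insensitive to the sign ambiguity in $\text{PSL}_2(\mathbb{Z})$, these values depend only on $g$ and not on its word representative.

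The substantive step is injectivity, and the crux is to show that the pair $(\rho_1(g), \rho_2(g))$ determines $\phi(g)$ as an element of $\text{PSL}_2(\mathbb{Z})$. Here I would use integrality crucially: for $M = \begin{bmatrix} a & b \\ c & d \end{bmatrix} \in \text{SL}_2(\mathbb{Z})$ the identity $ad - bc = 1$ forces each column to be a primitive integer vector, so the reduced fraction $\rho_1(g) = \frac{a}{c}$ recovers the first column $(a,c)$ up to sign, and $\rho_2(g) = \frac{b}{d}$ recovers the second column up to sign. Writing the two columns as $\varepsilon_1(p_1, q_1)$ and $\varepsilon_2(p_2, q_2)$ with $\varepsilon_1, \varepsilon_2 \in \{\pm 1\}$, the determinant condition reads $\varepsilon_1 \varepsilon_2 (p_1 q_2 - p_2 q_1) = 1$, which pins down the relative sign $\varepsilon_1\varepsilon_2$; the remaining overall sign is exactly the ambiguity quotiented out in passing from $\text{SL}_2(\mathbb{Z})$ to $\text{PSL}_2(\mathbb{Z})$. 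Hence $\phi(g)$ is uniquely determined, and (unlike the situation over $\mathbb{R}$, where three points would be needed) two evaluations suffice precisely because of integrality. Thus $\rho_1(g) = \rho_1(h)$ together with $\rho_2(g) = \rho_2(h)$ give $\phi(g) = \phi(h)$.

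To finish, I would feed $\phi(g) = \phi(h)$ into the fundamental short exact sequence $1 \to \langle \Delta^2 \rangle \to B_3 \to \text{PSL}_2(\mathbb{Z}) \to 1$: the kernel of $\phi$ is the center $\langle \Delta^2 \rangle$, so $gh^{-1} = \Delta^{2k}$ for some $k \in \mathbb{Z}$, i.e. $g = \Delta^{2k}h$. Applying $\epsilon$ and using $\epsilon(\Delta^2) = 6$ yields $\epsilon(g) = 6k + \epsilon(h)$; since $\epsilon(g) = \epsilon(h)$ by hypothesis, $k = 0$ and $g = h$. I expect the main obstacle to be the column-recovery argument of the previous paragraph — verifying that two evaluations of a linear fractional transformation pin down an element of $\text{PSL}_2(\mathbb{Z})$ — while the reduction through the short exact sequence and the exponent count are routine.
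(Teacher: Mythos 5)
Your proof is correct and follows essentially the same route as the paper: well-definedness by identifying $\left(\rho_1,\rho_2\right)$ with the values of $\phi\left(g\right)$ at $\infty$ and $0$ and $\epsilon$ with the abelianization homomorphism, then injectivity from the fundamental short exact sequence (Proposition~\ref{propB_3PSL_2}) together with the fact that $\epsilon$ is injective on $\left\langle \Delta^2 \right\rangle$ (since $\epsilon\left(\Delta^{2k}\right)=6k$). Your column-primitivity argument showing that the two evaluations at $\infty$ and $0$ pin down an element of $\text{PSL}_2\left(\mathbb{Z}\right)$ is a correct and worthwhile elaboration of a step the paper's one-line injectivity proof leaves implicit.
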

\begin{proof}
The statement that $\rho$ is well-defined follows from the discussion above. The statement that $\rho$ is injective follows from Proposition~\ref{propB_3PSL_2} since $\epsilon:B_3\to \mathbb{Z}$ restricted to the subgroup $\left\langle \Delta^2 \right\rangle$ is injective.
\end{proof}

Lemma~\ref{lemmainjB_3} is important in the sense that it allows us to determine a braid by a triple of numbers, each of which can be effectively computed. In particular, we use the map $\rho$ to give a fast solution to the word problem in the braid group $B_3$ with respect to the Artin presentation.

\begin{theorem}
The word problem in the braid group $B_3$ with respect to the Artin presentation can be solved in linear time. Indeed, $\rho\left(w\right)$ can be computed in running time $\mathcal{O}\left(L\right)$ if $w$ is a word in $B_3$ of length $L$, and a pair of words $w,w'$ in $B_3$ are equal if and only if $\rho\left(w\right) = \rho\left(w'\right)$. 
\end{theorem}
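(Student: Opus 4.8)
The plan is to separate the two assertions of the theorem: the correctness of $\rho$ as a solution to the word problem, and the linear-time computability of $\rho(w)$. The first assertion---that $w$ and $w'$ represent the same element of $B_3$ if and only if $\rho(w) = \rho(w')$---is essentially immediate from Lemma~\ref{lemmainjB_3}. Indeed, since $\rho$ is well-defined, equal words have equal images; and since $\rho$ is injective, equal images force the words to represent the same group element. I would simply quote Lemma~\ref{lemmainjB_3} for this equivalence and devote the bulk of the argument to the complexity claim.

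For the computability of $\rho(w)$, the key observation is that $\rho_1(w)$ and $\rho_2(w)$ are the affine coordinates of the images of the points $\infty = [1:0]$ and $0 = [0:1]$ under the matrix $\phi(w) = S^{a_1}T^{b_1}\cdots S^{a_n}T^{b_n}$, as recorded in the remark following the definition of $\rho$ and justified by Proposition~\ref{propSL_2fg}. I would therefore compute $\phi(w)$ directly as a $2\times 2$ integer matrix by reading $w$ from left to right and maintaining a running product: each letter $\sigma_1^{\pm 1}$ multiplies the accumulator on the right by $S^{\pm 1}$ and each letter $\sigma_2^{\pm 1}$ multiplies it by $T^{\pm 1}$. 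This uses $L$ matrix multiplications, each a constant number of integer additions and multiplications, so $\phi(w)$ is computed in $\mathcal{O}(L)$ arithmetic operations. Writing $\phi(w) = \begin{bmatrix} p & q \\ r & s\end{bmatrix}$, I would then read off $\rho_1(w)$ and $\rho_2(w)$ as the ratios $p/r$ and $q/s$ (with the convention $\tfrac{1}{0} = \infty$); since $\det\phi(w) = ps - qr = 1$, any common divisor of a column must divide $1$, so these fractions are automatically in lowest terms and no reduction step is needed. Finally, $\epsilon(w) = \sum_i (a_i + b_i)$ is just the signed letter count of $w$, computed in a single linear pass. Assembling these three contributions yields $\rho(w)$ in $\mathcal{O}(L)$ total.

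Since Lemma~\ref{lemmainjB_3} hands us correctness outright and the matrix recurrence above is entirely standard, the only point requiring genuine care---and hence the main (if minor) obstacle---is the computational model. The entries of $\phi(w)$ grow like the convergents of the associated continued fraction, hence as fast as Fibonacci-type numbers of magnitude exponential in $L$; consequently the assertion of linear running time is made in the standard model in which each arithmetic operation on an integer is charged unit cost, and I would state this convention explicitly rather than leave it implicit. The remaining details---verifying the $\det = 1$ reduction and matching the two column ratios to $\rho_1$ and $\rho_2$ via the $\infty$- and $0$-evaluations of Proposition~\ref{propSL_2fg}---are routine bookkeeping.
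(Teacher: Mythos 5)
Your proposal is correct and takes essentially the same approach as the paper: correctness of the criterion is quoted directly from Lemma~\ref{lemmainjB_3}, and the rest of the argument establishes that each component of $\rho$ is computable in time $\mathcal{O}\left(L\right)$. Your treatment of the complexity half is in fact more explicit than the paper's one-line assertion --- the left-to-right matrix recurrence for $\phi\left(w\right)$, the observation that $\det \phi\left(w\right) = 1$ makes the column ratios automatically lowest-terms representatives of $\rho_1\left(w\right)$ and $\rho_2\left(w\right)$, and the caveat that ``linear time'' refers to unit-cost arithmetic (since the matrix entries grow exponentially in $L$) are all sound points that the paper leaves implicit.
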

\begin{proof}
The map $\rho:B_3\to \left(\mathbb{P}^1\right)^2\times \mathbb{Z}$ is well-defined and injective according to Lemma~\ref{lemmainjB_3}. In particular, if $w$ and $w'$ are words in $B_3$, then $w = w'$ if and only if $\rho\left(w\right) = \rho\left(w'\right)$. The statement follows since each component of $\rho$ can be computed in running time $\mathcal{O}\left(L\right)$.
\end{proof}

We conclude this section with a new normal form for braids in $B_3$ based on the normal form for words in the generating set $\{S,T\}$ of $\text{PSL}_2\left(\mathbb{Z}\right)$ in Proposition~\ref{propnorforPSL_2}. The normal form is a variation of the Garside normal form~\cite{garside1969braid} in $B_3$.

\begin{theorem}
\label{thmnorfor}
If $g\in B_3$, then we can uniquely write $g = \Delta^{k}\prod_{i=1}^{n} \sigma_1^{a_i}\sigma_2^{b_i}$, where $a_1\geq 0$, $b_n\in \mathbb{Z}$, $a_i> 0$ for $2\leq i\leq n$, and $b_i<0$ for $1\leq i\leq n-1$.
\end{theorem}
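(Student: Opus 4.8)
The plan is to transport the normal form for $\text{PSL}_2\left(\mathbb{Z}\right)$ established in the proof of Proposition~\ref{propnorforPSL_2} across the fundamental short exact sequence of Proposition~\ref{propB_3PSL_2}, and then to use the exponent homomorphism $\epsilon$ to pin down the remaining central ambiguity. The first observation is that the sign and positivity constraints on the exponents in the statement are exactly the conditions for $\left[a_1,-b_1,\dots,a_{n-1},-b_{n-1},a_n\right]$ to be a simple continued fraction expansion with nonnegative leading term: the requirements $a_1\geq 0$, $a_i>0$ for $2\leq i\leq n$, and $b_i<0$ for $1\leq i\leq n-1$ translate into $a_1\geq 0$ together with positivity of every later partial quotient, while $b_n\in\mathbb{Z}$ stays free. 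Under the correspondence $\sigma_1\mapsto S$, $\sigma_2\mapsto T$, this is precisely the shape of word occurring in the $\text{PSL}_2\left(\mathbb{Z}\right)$ normal form.

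For existence I would begin with $g\in B_3$ and apply $\phi$ to obtain $\phi\left(g\right)\in\text{PSL}_2\left(\mathbb{Z}\right)$. By Proposition~\ref{propnorforPSL_2}, in the explicit form recorded in its proof, we may write $\phi\left(g\right) = \phi' S^{a_1}T^{b_1}\cdots S^{a_n}T^{b_n}$ uniquely, where $\phi'\in\{I,STS\}$, the expansion $\left[a_1,-b_1,\dots,a_n\right]$ is simple with $a_1\geq 0$, and $b_n\in\mathbb{Z}$. Since $\phi\left(\Delta\right) = STS$, the element $h = \Delta^{\delta}\prod_{i=1}^{n}\sigma_1^{a_i}\sigma_2^{b_i}\in B_3$, with $\delta=0$ when $\phi'=I$ and $\delta=1$ when $\phi'=STS$, satisfies $\phi\left(h\right) = \phi\left(g\right)$. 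Hence $h^{-1}g\in\ker\phi = \left\langle\Delta^2\right\rangle$ by Proposition~\ref{propB_3PSL_2}, so $h^{-1}g = \Delta^{2m}$ for a unique $m\in\mathbb{Z}$. As $\Delta^2$ is central, this rearranges to $g = \Delta^{2m+\delta}\prod_{i=1}^{n}\sigma_1^{a_i}\sigma_2^{b_i}$, and setting $k=2m+\delta$ yields the asserted expression.

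For uniqueness, suppose $g = \Delta^{k}\prod_{i=1}^{n}\sigma_1^{a_i}\sigma_2^{b_i} = \Delta^{k'}\prod_{i=1}^{n'}\sigma_1^{a'_i}\sigma_2^{b'_i}$ with both sides in the claimed form. Applying $\phi$ and using $\left(STS\right)^2 = I$ in $\text{PSL}_2\left(\mathbb{Z}\right)$, the two images are $\left(STS\right)^{k\bmod 2}\prod S^{a_i}T^{b_i}$ and $\left(STS\right)^{k'\bmod 2}\prod S^{a'_i}T^{b'_i}$, each in the $\text{PSL}_2\left(\mathbb{Z}\right)$ normal form. The uniqueness clause of Proposition~\ref{propnorforPSL_2} forces $k\equiv k'\pmod 2$, $n=n'$, and $a_i=a'_i$, $b_i=b'_i$ for every $i$. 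Finally, applying $\epsilon$ and using $\epsilon\left(\Delta\right)=3$ with the now-established equality of the two words gives $3k=3k'$, hence $k=k'$; equivalently, one invokes the injectivity of $\epsilon$ restricted to $\left\langle\Delta^2\right\rangle$ exploited in Lemma~\ref{lemmainjB_3}.

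I do not expect a deep obstacle here, since the argument is essentially a dictionary between the two quotients of $B_3$; the subtlety that demands care is the clean separation of the information carried by $k$ into a \emph{parity}, recorded by $\phi'\in\{I,STS\}$ and visible in $\text{PSL}_2\left(\mathbb{Z}\right)$ only modulo the order-two element $STS$, and a \emph{magnitude}, living in the central direction and detected by $\epsilon$. The point I would take care to justify is that these two pieces of data, extracted respectively from $\phi$ and $\epsilon$, are mutually consistent and jointly determine $k$; this is exactly the combination underlying the injectivity of $\rho$ in Lemma~\ref{lemmainjB_3}, which I would reuse rather than reprove. A final routine check is the handling of degenerate cases, such as $\phi\left(g\right)=I$ or $g$ already central, where $n$ or the interior word may be empty; these are absorbed into the corresponding degenerate cases of the $\text{PSL}_2\left(\mathbb{Z}\right)$ normal form.
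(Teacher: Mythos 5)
Your proof is correct and takes essentially the same route as the paper: lift the normal form of Proposition~\ref{propnorforPSL_2} through the fundamental short exact sequence of Proposition~\ref{propB_3PSL_2}, with existence coming from absorbing the kernel $\left\langle \Delta^2\right\rangle$ into the power of $\Delta$ and uniqueness coming from the uniqueness clause in $\text{PSL}_2\left(\mathbb{Z}\right)$. The one point where you are more explicit than the paper---observing that the $\text{PSL}_2\left(\mathbb{Z}\right)$ uniqueness only determines $k$ modulo $2$, and pinning down $k$ exactly via $\epsilon$ (equivalently, the infinite order of $\Delta$)---is a detail the paper's proof leaves implicit, not a different argument.
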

\begin{proof}
Proposition~\ref{propnorforPSL_2} implies that $\phi\left(g\right) = \phi\left(\Delta^{\epsilon}\prod_{i=1}^{n} \sigma_1^{a_i}\sigma_2^{b_i}\right)$, where $\epsilon\in \{0,1\}$, and $\prod_{i=1}^{n} \sigma_1^{a_i}\sigma_2^{b_i}$ satisfies the desired conditions. The statement that $g$ admits an expression of the given form now follows from Proposition~\ref{propB_3PSL_2}. The uniqueness statement follows from the corresponding uniqueness statement in Proposition~\ref{propnorforPSL_2}.
\end{proof}

In the Garside normal form $g = \Delta^{k}\prod_{i=1}^{n} \sigma_1^{a_i}\sigma_2^{b_i}$, the exponents $a_i$ and $b_i$ of the Artin generators are nonnegative. On the other hand, in the normal form of Theorem~\ref{thmnorfor}, the exponents of $\sigma_1$ are nonnegative and the exponents of $\sigma_2$ are non-positive, except possibly the last exponent of $\sigma_2$.

\section{The conjugacy problem in $B_3$}
\label{seccpB_3}
In this section, we establish a linear time algorithm to solve the conjugacy problem in the braid group $B_3$. In Subsection~\ref{subseccpP}, we review the classical algorithm to solve the conjugacy problem in the groups $\text{SL}_2\left(\mathbb{Z}\right)$ and $\text{PSL}_2\left(\mathbb{Z}\right)$ in terms of continued fraction expansions. In Subsection~\ref{subseccpBP}, we use this algorithm and the surjective homomorphism $\phi:B_3\to \text{PSL}_2\left(\mathbb{Z}\right)$ to establish a linear time algorithm to solve the conjugacy problem in $B_3$. We also give a new proof that $B_3$ is conjugacy-separable.

\subsection{The conjugacy problem in $\text{PSL}_2\left(\mathbb{Z}\right)$}
\label{subseccpP}
In this subsection, we review the classical linear time algorithm to solve the conjugacy problem in the groups $\text{SL}_2\left(\mathbb{Z}\right)$ and $\text{PSL}_2\left(\mathbb{Z}\right)$ in terms of continued fraction expansions. A \textit{class function} on a group is a function that is constant on conjugacy classes. A class function is a \textit{complete class invariant} if it has different values on different conjugacy classes. We will define a class function on $\text{PSL}_2\left(\mathbb{Z}\right)$ and show that it is a complete class invariant.

In the sequel, we will identify elements of $\text{PSL}_2\left(\mathbb{Z}\right)$ (linear fractional transformations) with elements of $\text{SL}_2\left(\mathbb{Z}\right)$ ($2\times 2$ matrices) by thinking of a linear fractional transformation $z\to \frac{az+b}{cz+d}$ as either of the $2\times 2$ matrices $\pm \begin{bmatrix} a & b \\ c & d \end{bmatrix}$. 

We define the (absolute value of the) \textit{trace function} $\text{tr}:\text{PSL}_2\left(\mathbb{Z}\right)\to \mathbb{Z}$ by associating the number $\left|a+d\right|$ to the linear fractional transformation $z\to \frac{az+b}{cz+d}$. The trace function corresponds to the absolute value of the usual trace of matrices in $\text{SL}_2\left(\mathbb{Z}\right)$, and is therefore a class function on $\text{PSL}_2\left(\mathbb{Z}\right)$. However, the trace function is not a complete class invariant on $\text{PSL}_2\left(\mathbb{Z}\right)$. The conjugacy classes in $\text{PSL}_2\left(\mathbb{Z}\right)$ for each nonnegative value of the trace function are described as follows. Theorem~\ref{thmccPSL_2} states that this description is correct.

If $\text{tr}\left(A\right) = 0$, then $A$ is conjugate to the linear fractional transformation \[\pm \begin{bmatrix} 0 & 1 \\ -1 & 0 \end{bmatrix}.\] If $\text{tr}\left(A\right) = 1$, then $A$ is conjugate to precisely one of the two linear fractional transformations \[\text{ } \pm \begin{bmatrix} 0 & 1 \\ -1 & 1 \end{bmatrix},\text{ } \pm \begin{bmatrix} 0 & -1 \\ 1 & 1 \end{bmatrix}.\] The linear fractional transformations with trace less than two are referred to as \textit{elliptic transformations}. The elliptic transformations are the finite order linear fractional transformations and the action of an elliptic transformation on the complex projective line has a single fixed point. The trace zero elliptic transformations have order two and the trace one elliptic transformations have order three.

If $\text{tr}\left(A\right) = 2$, then $A$ is conjugate to a linear fractional transformation in the infinite family \[\pm \begin{bmatrix} 1 & s \\ 0 & 1 \end{bmatrix},\] for precisely one value of $s\in \mathbb{Z}$. The linear fractional transformations with trace equal to two are referred to as \textit{parabolic transformations} and the action of a parabolic transformation on the complex projective line has a single fixed point. 

If $\text{tr}\left(A\right)>2$, then $A$ is referred to as a \textit{hyperbolic transformation} and the action of $A$ on the real projective line has two fixed points corresponding to two linearly independent eigenvectors of the action of $A$ on the real affine plane. The fixed points of $A$ are real quadratic irrational numbers, and in particular, their continued fraction expansions are (eventually) periodic. We write $\xi\left(A\right)$ for the fundamental period of the (continued fraction expansion of the) fixed point corresponding to the eigenvalue of $A$ with absolute value greater than one. (The definition of the fundamental period of an infinite continued fraction is at the end of Subsection~\ref{subsecarithmetic}.) The conjugacy class of the hyperbolic transformation $A$ consists of all linear fractional transformations with the same trace and the same fundamental period.  

The statement that the conjugacy classes in $\text{PSL}_2\left(\mathbb{Z}\right)$ are as described directly above is well-known but we give a proof. We also refer the reader to~\cite{onishifraction} for a proof of the very similar description of the conjugacy classes in $\text{SL}_2\left(\mathbb{Z}\right)$. 

\begin{theorem}
\label{thmccPSL_2}
The conjugacy classes in $\text{PSL}_2\left(\mathbb{Z}\right)$ are as described directly above.
\end{theorem}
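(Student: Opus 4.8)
The plan is to use the trace function $\text{tr}$, which is a class function on $\text{PSL}_2(\mathbb{Z})$, to split the problem into the three regimes $\text{tr}<2$ (elliptic), $\text{tr}=2$ (parabolic), and $\text{tr}>2$ (hyperbolic), and to treat each regime on its own. In every case I would first produce the claimed representatives and then prove that distinct representatives are pairwise non-conjugate, so that the families listed above are \emph{exactly} the conjugacy classes. Throughout I identify fixed points of a transformation $z\to\frac{az+b}{cz+d}$ with the roots of $cz^2+(d-a)z-b=0$ on $\mathbb{P}^1$, so that conjugating a transformation by $g$ carries its fixed point set to the $g$-image.

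For the elliptic regime ($\text{tr}\in\{0,1\}$, which coincides with the set of finite-order elements since $\text{tr}\geq 2$ forces infinite order) I would invoke the free product decomposition $\text{PSL}_2(\mathbb{Z})\cong\mathbb{Z}/2\mathbb{Z}\ast\mathbb{Z}/3\mathbb{Z}$ from Proposition~\ref{propfpPSL_2}, whose factors are generated by the order-two element $STS$ (of trace $0$) and the order-three element $ST$ (of trace $1$). Two standard facts about free products then finish this case: every torsion element is conjugate into one of the two free factors, and two elements lying in a common factor are conjugate in the free product exactly when they are conjugate inside that factor. Since $\mathbb{Z}/2\mathbb{Z}$ has a unique nontrivial element, all trace-zero elements form a single class, represented by $STS$, which is the listed trace-zero matrix. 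Since the two nontrivial elements $ST$ and $(ST)^{-1}$ of the abelian group $\mathbb{Z}/3\mathbb{Z}$ are not conjugate there, they remain non-conjugate in $\text{PSL}_2(\mathbb{Z})$; this yields exactly the two listed trace-one classes, and I would verify that the two displayed trace-one matrices represent $ST$ and $(ST)^{-1}$.

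For the parabolic regime ($\text{tr}=2$) the transformation has a single fixed point on $\mathbb{P}^1$, and it is rational because the fixed-point equation has integer coefficients and a double root. Using that $\text{PSL}_2(\mathbb{Z})$ acts transitively on $\mathbb{P}^1=\mathbb{Q}\cup\{\infty\}$ — the orbit computation in the proof of Proposition~\ref{propSL_2fg} — I would conjugate this fixed point to $\infty$, putting the transformation into the upper-triangular form $S^s$ for some $s\in\mathbb{Z}$. To see that $S^s$ is conjugate to $S^{s'}$ only when $s=s'$, I would argue that any conjugating element must fix $\infty$ (the common fixed point), and that the stabilizer of $\infty$ is the abelian group $\langle S\rangle$, whose elements commute with $S^s$; hence conjugation within the stabilizer is trivial and $s'=s$. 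This shows the parabolic classes are indexed precisely by $s\in\mathbb{Z}$.

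The hyperbolic regime ($\text{tr}>2$) is the heart of the argument and the main obstacle. Here the transformation $A$ has two distinct real fixed points, both quadratic irrationals, hence with eventually periodic continued fraction expansions in the sense of Subsection~\ref{subsecarithmetic}; let $\alpha$ be the attracting one. Conjugating $A$ by $g$ sends $\alpha$ to $g(\alpha)$, so the $\text{PSL}_2(\mathbb{Z})$-equivalence class of $\alpha$, together with $\text{tr}(A)$, is a class invariant. The crucial classical input I would invoke is Serret's theorem: two quadratic irrationals are $\text{PSL}_2(\mathbb{Z})$-equivalent if and only if their continued fraction expansions share an eventual tail, equivalently the same fundamental period; this identifies $\xi(A)$ as a complete invariant of the equivalence class of $\alpha$. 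For the converse, given hyperbolic $A,B$ with equal trace and equal fundamental period, I would use Serret's theorem to conjugate so that $A$ and $B$ share the attracting fixed point $\alpha$. The pointwise stabilizer of $\alpha$ in $\text{PSL}_2(\mathbb{Z})$ is infinite cyclic, generated by a primitive hyperbolic element $P$ (a discreteness argument, since any nonidentity stabilizing element is hyperbolic with the same fixed points and its eigenvalue lies in a discrete subgroup of $\mathbb{R}^{\times}$), so $A$ and $B$ are powers of $P$, in fact positive powers because $\alpha$ is attracting for both. Since $\text{tr}(P^n)=\lambda^{n}+\lambda^{-n}$ is strictly increasing in $n\geq 1$ for the larger eigenvalue $\lambda>1$, equal traces force equal exponents, whence $A=B$ and the original transformations are conjugate. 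It is worth emphasizing why both invariants are genuinely needed: a primitive hyperbolic and its powers share their fixed points and hence their fundamental period, and are separated only by the trace. The two points I expect to require the most care are the precise form of Serret's theorem linking equivalence of quadratic irrationals to fundamental periods, and the identification of the stabilizer of $\alpha$ as an infinite cyclic group.
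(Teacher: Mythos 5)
Your proposal is correct, and in two of the three regimes it takes a genuinely different route from the paper. The paper runs a single unified matrix-reduction argument: choose a representative $\begin{bmatrix} a & b \\ c & d \end{bmatrix}$ minimizing $\left|a\right|$ over the conjugacy class, show $\left|a\right|<\left|b\right|,\left|c\right|$ unless $A=I$, and read off the elliptic representatives by a direct trace/determinant computation; the parabolic case is then done by Cayley--Hamilton and extending a primitive integral eigenvector to a basis of $\mathbb{Z}^2$, and the hyperbolic case by Serret's theorem. You instead treat the elliptic classes via the free product decomposition $\text{PSL}_2\left(\mathbb{Z}\right)\cong \mathbb{Z}/2\mathbb{Z}\ast\mathbb{Z}/3\mathbb{Z}$ (torsion elements are conjugate into a factor, and conjugacy of factor elements is detected inside the factor), and the parabolic classes via transitivity of the action on $\mathbb{P}^1$ together with the computation $\text{Stab}\left(\infty\right)=\left\langle S\right\rangle$; both routes are standard and valid, and the paper itself flags the free-product viewpoint as an alternative. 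What the paper's reduction buys is constructivity: the ``in fact'' addenda to its proof (the sign criterion in $b$, $c$ for trace one, and the gcd formula for $s$) are exactly what feed the definitions of $\lambda$ and $\mu$ and the linear-time claims later, and your route does not produce these formulas. What your route buys is a more rigorous hyperbolic case: the paper simply asserts that if $CAC^{-1}$ and $B$ share the attracting fixed point then they ``must be the same linear fractional transformation,'' which as stated is a gap (distinct powers of a primitive hyperbolic element share both fixed points); your stabilizer argument --- the stabilizer of a quadratic irrational is infinite cyclic generated by a primitive hyperbolic $P$, both elements are positive powers of $P$ since the common fixed point is attracting for both, and $\text{tr}\left(P^n\right)=\lambda^n+\lambda^{-n}$ is strictly increasing in $n\geq 1$ --- is precisely what is needed to close it, and it also explains why trace and fundamental period are independently necessary invariants. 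Two small repairs: your parenthetical identifying the elliptic regime with the torsion elements argues in the wrong direction (you need that trace $0$ or $1$ forces finite order, which follows from Cayley--Hamilton, $A^2=-I$ resp.\ $A^3=-I$, not that trace $\geq 2$ forces infinite order, which is false for the identity), and in the parabolic case the identity must be set aside since it does not have a single fixed point.
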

\begin{proof}
In this proof, we will identify a linear fractional transformation in $\text{PSL}_2\left(\mathbb{Z}\right)$ with its matrix representative in $\text{SL}_2\left(\mathbb{Z}\right)$ of nonnegative trace (or its pair of matrix representatives if both matrix representatives have trace zero). The fixed points of a linear fractional transformation in the complex projective line $\mathbb{P}^1$ can be identified with the eigenspaces of its matrix representatives in the complex affine plane $\mathbb{A}^2$.

Let us choose $A = \begin{bmatrix} a & b \\ c & d \end{bmatrix}$ such that $\left|a\right|$ is minimized in the conjugacy class of $A$. We claim that $\left|a\right|< \left|b\right|$ and $\left|a\right|< \left|c\right|$, unless $A = I_2$, the $2\times 2$ identity matrix. Let us assume, for a contradiction, that the claim is false. In this case, $a\neq 0$. If $\left|a\right|\geq\left|c\right|$, then conjugating $A$ by precisely one of the two matrices $S^{\pm 1} = \begin{bmatrix} 1 & \pm 1 \\ 0 & 1 \end{bmatrix}$ (depending on the signs of $a$ and $c$) will lower the value of $\left|a\right|$ by $\left|c\right|$. If $\left|a\right|\geq\left|b\right|$, then conjugating $A$ by precisely one of the two matrices $T^{\pm 1} = \begin{bmatrix} 1 & 0 \\ \mp 1 & 1 \end{bmatrix}$ (depending on the signs of $a$ and $b$) will lower the value of $\left|a\right|$ by $\left|b\right|$. We have a contradiction in either case by the minimality of $\left|a\right|$, unless $b = 0 = c$ and $A = I_2$. Therefore, the claim is established.

We will split the remainder of the proof into cases according to whether $A$ is an elliptic, parabolic, or hyperbolic transformation.

\begin{description}[style=unboxed,leftmargin=0cm]

\item[Case 1] ($0\leq \text{tr}\left(A\right)\leq 1$)

If $\text{tr}\left(A\right) = 0$, then we will prove that $A = \pm \begin{bmatrix} 0 & 1 \\ -1 & 0\end{bmatrix}$, and if $\text{tr}\left(A\right) = 1$, then we will prove that $A$ is either one of the two matrices $\begin{bmatrix} 0 & \pm 1 \\ \mp 1 & 1 \end{bmatrix}$.

Of course, $-bc = a^2 - \text{tr}\left(A\right)a + 1$ since $\text{det}\left(A\right) = 1$. If $a = 0$, then this implies that $bc = -1$. In this case, $A = \pm \begin{bmatrix} 0 & 1 \\ - 1 & 0 \end{bmatrix}$ if $\text{tr}\left(A\right) = 0$, and either $A = \begin{bmatrix} 0 & 1 \\ - 1 & 1 \end{bmatrix}$ or $A = \begin{bmatrix} 0 & -1 \\ 1 & 1 \end{bmatrix}$ if $\text{tr}\left(A\right) = 1$.

On the other hand, we will show that $a\neq 0$ is impossible. Indeed, we have that $b\neq 0$ and $c\neq 0$ are non-zero integers with the opposite sign since $0\leq \text{tr}\left(A\right)\leq 1$ and $\text{det}\left(A\right) = 1$. The hypothesis $\text{det}\left(A\right) = 1$ and the assumption $\left|a\right|<\left|b\right|,\left|c\right|$ imply that either $-bc  < -\left(b+1\right)\left(c-\text{tr}\left(A\right)\right) + 1$ if $b<0$ or $-bc < b\left(-c-1-\text{tr}\left(A\right)\right) + 1$ if $c<0$. In particular, if $b<0$, then $0<-c+\left(b+1\right)\text{tr}\left(A\right) + 1$, which is impossible since $c>0$. Similarly, if $c<0$, then $0<-b\left(1+\text{tr}\left(A\right)\right) + 1$, which is impossible since $b>0$. Therefore, we have considered all possibilities if $0\leq \text{tr}\left(A\right)\leq 1$, and the proof is complete in this case.

In fact, the constructive nature of the argument shows that if $A = \begin{bmatrix} a & b \\ c & d \end{bmatrix}$ with $\text{tr}\left(A\right) = 1$ and $\left|a\right|$ not necessarily minimized in the conjugacy class of $A$, then $A$ is conjugate to $\begin{bmatrix} 0 & 1 \\ - 1 & 1 \end{bmatrix}$ if $b>0$, and $A$ is conjugate to $\begin{bmatrix} 0 & -1 \\ 1 & 1 \end{bmatrix}$ if $c>0$. 

\item[Case 2] ($\text{tr}\left(A\right) = 2$)

If $\text{tr}\left(A\right) = 2$, then the characteristic polynomial of $A$ is $\left(x-1\right)^2$. The Cayley-Hamilton theorem implies that $\left(A-I\right)^2 = 0$. If $v\in \mathbb{Z}^2\setminus \{\left(0,0\right)\}$ is a non-zero vector, then either $w=Av - v$ is an eigenvector of $A$ with eigenvalue one, or $w = \left(0,0\right)$ in which case $v$ is an eigenvector of $A$ with eigenvalue one. If we rescale the eigenvector of $A$ with eigenvalue one, then we may assume that it is a primitive vector in $\mathbb{Z}^2$, and therefore it extends to a basis of $\mathbb{Z}^2$. The matrix of $A$ with respect to this basis is triangular with diagonal entries equal to one, since $\text{det}\left(A\right) = 1$. Therefore, $A$ is conjugate to $\begin{bmatrix} 1 & s \\ 0 & 1 \end{bmatrix}$ for some $s\in \mathbb{Z}$. The uniqueness of $s\in \mathbb{Z}$ follows from the fact that there is a unique primitive eigenvector of $A$ in $\mathbb{Z}^2$ with eigenvalue one, up to multiplication by $\pm 1$, unless $A=I$. 

In fact, the constructive nature of the argument shows that $s$ can be determined as follows, if we take $v = \left(1,0\right)$. If $a=1$ and $c = 0$, then $s = b$, since $v$ is an eigenvector of $A$ with eigenvalue one in this case. Otherwise, let $u$ be the greatest common divisor of $a-1$ and $c$, and choose $r,t\in \mathbb{Z}$ such that $\left(a-1\right)t - cr = u$. If $a\neq 1$, then $s = u\left(r + \frac{b}{a-1}t\right)$. If $c\neq 0$, then $s= u\left(r + \frac{d-1}{c}t\right)$.

\item[Case 3] ($\text{tr}\left(A\right) > 2$)

Finally, we consider a pair $A,B\in \text{PSL}_2\left(\mathbb{Z}\right)$ with $\text{tr}\left(A\right)=\text{tr}\left(B\right)>2$. In this case, the $2\times 2$ matrices representing $A$ and $B$ each have two distinct real eigenvalues with product equal to one, and the eigenspaces for these eigenvalues correspond to a pair of distinct fixed points of each of the corresponding linear fractional transformations. Let $\kappa_A$ and $\kappa_B$ be the fixed points corresponding to the eigenvalue with absolute value greater than one of the linear fractional transformations $A$ and $B$, respectively. 

Firstly, we claim that $A$ and $B$ are conjugate if and only if there is a linear fractional transformation $C$ such that $C\kappa_A = \kappa_B$. Indeed, if $A$ and $B$ are conjugate and $CAC^{-1} = B$, then $C$ maps the fixed points of $A$ to the fixed points of $B$ (with corresponding eigenvalues). Conversely, if there is a linear fractional transformation $C$ such that $C\kappa_A = \kappa_B$, then $CAC^{-1}$ and $B$ have a fixed point in common and thus must be the same linear fractional transformation.

Secondly, we claim that there is a linear fractional transformation $C$ such that $C\kappa_A = \kappa_B$ if and only if the fundamental periods of the continued fraction expansions of $\kappa_A$ and $\kappa_B$ are equal. (We refer to the end of Subsection~\ref{subsecarithmetic} for our definition of the fundamental period of a continued fraction expansion.) If $z\in \mathbb{R}$ is a quadratic irrational number and $C$ is a linear fractional transformation, then a classical theorem of Serret implies that the fundamental period of $Cz$ is the same as the fundamental period of $z$ (we refer to the appendix of~\cite{bombiericontinuedfraction} for a short computational proof). Therefore, if $C$ is a linear fractional transformation such that $C\kappa_A = \kappa_B$, then the fundamental periods of $\kappa_A$ and $\kappa_B$ are equal. Conversely, if the continued fraction expansion of $\kappa_A$ is $\left[c_0,\dots,c_{n-1},\overline{c_n,\dots,c_{n'-1}},\dots\right]$, and if $C'$ is the inverse of the linear fractional transformation induced by the continued fraction expansion $\left[c_0,\dots,c_{n-1}\right]$, then the continued fraction expansion of $C'\kappa_A$ is $\left[\overline{c_n,\dots,c_{n'-1}},\dots\right]$. In particular, if the fundamental periods of the continued fraction expansions of $\kappa_A$ and $\kappa_B$ are equal, then there is a linear fractional transformation $C$ such that $C\kappa_A=\kappa_B$.

Therefore, $A$ and $B$ are conjugate if and only if the fundamental periods of the continued fraction expansions of $\kappa_A$ and $\kappa_B$ are equal, i.e., if and only if $\xi\left(A\right) = \xi\left(B\right)$.
\end{description}
We have considered all cases and the statement is established.
\end{proof}

We remark that there are other solutions to the conjugacy problem in $\text{PSL}_2\left(\mathbb{Z}\right)$. For example, it is a classical fact that $\text{PSL}_2\left(\mathbb{Z}\right)$ is isomorphic to the free product of cyclic groups $\mathbb{Z}/2\mathbb{Z}\text{ }\ast\text{ }\mathbb{Z}/3\mathbb{Z}$ (see Proposition~\ref{propfpPSL_2} and the subsequent discussion), and the solution to the conjugacy problem in free products of cyclic groups is well-known~\cite{magnuscgt}. Indeed, if $w=\prod_{i=1}^{n} g_i^{e_i}$ is a reduced word in such a free product with $e_i\in \{\pm 1\}$ for $1\leq i\leq n$, then the \textit{cyclic shifts} of $w$ are $w_j = \left(\prod_{i=j}^{n} g_i^{e_i}\right)\left(\prod_{i=1}^{j-1} g_i^{e_i}\right)$ for $1\leq j\leq n$. Of course, $w_j$ is conjugate to $w$ for $1\leq j\leq n$. However, it is also true in this case that a pair of reduced words are conjugate if and only if one of the words is a cyclic shift of the other. The main observation is that distinct reduced words in free products of cyclic groups represent different group elements.

Let us now define a class function $\lambda:\text{PSL}_2\left(\mathbb{Z}\right)\to \mathbb{C}$, which we will establish is a complete class invariant using Theorem~\ref{thmccPSL_2}. Let $i=\sqrt{-1}$ be the imaginary unit and let $\omega = e^{2\pi i/3}$ be the cubic root of unity with positive imaginary part. We represent the linear fractional transformation $A\in \text{PSL}_2\left(\mathbb{Z}\right)$ by a matrix $\begin{bmatrix} a & b \\ c & d \end{bmatrix}\in \text{SL}_2\left(\mathbb{Z}\right)$ of nonnegative trace. If $\text{tr}\left(A\right) = 1$, then $b,c\neq 0$ have the opposite sign since $\text{det}\left(A\right) = 1$. If $\text{tr}\left(A\right) = 2$, then we define $s\left(A\right)\in \mathbb{Z}$ to be such that $A$ is conjugate to $\begin{bmatrix} 1 & s\left(A\right) \\ 0 & 1 \end{bmatrix}$. If $\text{tr}\left(A\right)>2$, then we define $\chi\left(A\right)$ to be the real quadratic irrational number with purely periodic continued fraction expansion $\left[\overline{\xi\left(A\right)},\dots\right]$. We now define $\lambda\left(A\right)$ as follows: \[\lambda\left(A\right) = \begin{cases} 
      i & \text{if } \text{tr}\left(A\right) = 0\\
     \omega &  \text{if } \text{tr}\left(A\right) = 1\text{ and } b>0\\
     -\omega & \text{if } \text{tr}\left(A\right) = 1\text{ and } c>0\\
     s\left(A\right) & \text{if } \text{tr}\left(A\right) = 2 \\
     \chi\left(A\right) & \text{if } \text{tr}\left(A\right) > 2
   \end{cases}.
\] If $\text{tr}\left(A\right)\neq 2$, then $\lambda\left(A\right)$ is a fixed point of a canonical representative of the conjugacy class of $A$. If $\text{tr}\left(A\right) = 2$, then $\lambda\left(A\right)$ is the translation distance of a canonical representative of the conjugacy class of $A$.

\begin{corollary}
\label{corcPSLlt}
The class function $\lambda:\text{PSL}_2\left(\mathbb{Z}\right)\to \mathbb{C}$ is a complete class invariant, i.e., $\lambda\left(A\right) = \lambda\left(B\right)$ for $A,B\in \text{PSL}_2\left(\mathbb{Z}\right)$ if and only if $A$ and $B$ are conjugate in $\text{PSL}_2\left(\mathbb{Z}\right)$. Furthermore, $\lambda\left(A\right)$ can be computed in running time $\mathcal{O}\left(L\right)$, where $L$ is the word length of $A$. In particular, the conjugacy problem in $\text{PSL}_2\left(\mathbb{Z}\right)$ can be solved in linear time.
\end{corollary}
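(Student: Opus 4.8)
The plan is to read the Corollary off from Theorem~\ref{thmccPSL_2}, which already partitions $\text{PSL}_2\left(\mathbb{Z}\right)$ into conjugacy classes according to the value of the trace function and describes each stratum explicitly. Two things must be checked: that $\lambda$ is a class function which separates conjugacy classes (hence a complete class invariant), and that $\lambda\left(A\right)$ is computable in time $\mathcal{O}\left(L\right)$. For the first, the guiding observation is that the value $\lambda\left(A\right)$ already records which trace-stratum $A$ belongs to, since the five cases land in disjoint subsets of $\mathbb{C}$: the value $i$ is purely imaginary, the values $\pm\omega$ are non-real and distinct from $i$ and from each other, the value $s\left(A\right)$ is a rational integer, and the value $\chi\left(A\right)$ is a real quadratic irrational. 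Because $\text{tr}$ is a class function, the stratum is itself a conjugacy invariant, so it suffices to prove that $\lambda$ is well-defined and injective on conjugacy classes within each stratum.

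I would then dispatch the strata in turn using the corresponding cases of Theorem~\ref{thmccPSL_2}. For $\text{tr}\left(A\right)=0$ there is a single class, sent to $i$, so there is nothing to separate. For $\text{tr}\left(A\right)=1$ there are exactly two classes, and the constructive criterion at the end of Case~1 (namely that $A$ is conjugate to $\begin{bmatrix} 0 & 1 \\ -1 & 1 \end{bmatrix}$ when $b>0$ and to $\begin{bmatrix} 0 & -1 \\ 1 & 1 \end{bmatrix}$ when $c>0$) shows that the assignment of $\omega$ versus $-\omega$ is well-defined and class-separating. For $\text{tr}\left(A\right)=2$, Case~2 gives a bijection between conjugacy classes and the translation length $s\left(A\right)\in\mathbb{Z}$, and the extended-Euclid formula for $s\left(A\right)$ there is exactly the definition used here, so $\lambda=s\left(A\right)$ is complete. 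For $\text{tr}\left(A\right)>2$, Case~3 (via Serret's theorem) shows that the fundamental period $\xi\left(A\right)$ is a complete invariant within a fixed trace, and $\chi\left(A\right)=\left[\overline{\xi\left(A\right)},\dots\right]$ records $\xi\left(A\right)$ faithfully.

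For the complexity claim I would represent $A$ by the product of the matrices $S^{\pm 1},T^{\pm 1}$ read off from a length-$L$ word, so that the integer matrix $\begin{bmatrix} a & b \\ c & d \end{bmatrix}$ and hence $\text{tr}\left(A\right)=\left|a+d\right|$ are obtained in $\mathcal{O}\left(L\right)$ arithmetic operations. In the parabolic stratum, $s\left(A\right)$ comes from a single application of the extended Euclidean algorithm to $a-1$ and $c$, which is linear. In the hyperbolic stratum I would compute the (eventually periodic) simple continued fraction of the fixed point $\kappa_A$ directly from the normal form of Proposition~\ref{propnorforPSL_2}, extract a minimal repeating block, and then select the lexicographically least cyclic shift of that block; the last step is the computation of the least cyclic rotation of a cyclic word of length $\mathcal{O}\left(L\right)$, which is itself linear by Booth's (or Duval's) algorithm, yielding $\chi\left(A\right)$ in time $\mathcal{O}\left(L\right)$.

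The main obstacle is the hyperbolic stratum, on two counts. First, on the structural side, Theorem~\ref{thmccPSL_2} identifies a hyperbolic class by the pair consisting of the trace and the fundamental period, so I must confirm that the data actually retained by $\lambda$ — namely the purely periodic expansion $\chi\left(A\right)$ — pins the class down; the point demanding the most care is whether the period is read with enough information to recover the class among all traces sharing the same fixed point, and I would check this step scrupulously against the possibility that $A$ is a proper power, where the fixed point, and therefore its fundamental period, is left unchanged. Second, on the algorithmic side, the delicate part is producing the fundamental period and its least cyclic shift in genuinely linear time while controlling the bit-size of the intermediate integers, which grow like $\mathcal{O}\left(L\right)$; bounding these and invoking a linear-time least-rotation subroutine is where I would concentrate the quantitative estimates.
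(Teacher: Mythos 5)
Your plan follows the paper's own proof quite closely: completeness is read off from Theorem~\ref{thmccPSL_2} stratum by stratum, and linearity comes from computing the matrix entries, one run of the extended Euclidean algorithm in the parabolic case, and a least-cyclic-rotation computation for the period in the hyperbolic case. The elliptic and parabolic strata and the complexity analysis are fine (your appeal to Booth's algorithm is in fact more explicit than what the paper does). But the point you isolate at the end as ``demanding the most care'' is not a verification you merely postponed --- it is a step that fails, and your suspicion about proper powers is exactly the reason. Take $A = ST^{-1} = \begin{bmatrix} 2 & 1 \\ 1 & 1 \end{bmatrix}$ and $B = A^2 = \begin{bmatrix} 5 & 3 \\ 3 & 2 \end{bmatrix}$. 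Both are hyperbolic, with $\text{tr}\left(A\right) = 3$ and $\text{tr}\left(B\right) = 7$, so $A$ and $B$ are not conjugate in $\text{PSL}_2\left(\mathbb{Z}\right)$, the trace being a class function. However, squaring squares the eigenvalues while keeping the eigenvectors, so $\kappa_B = \kappa_A = \frac{1+\sqrt{5}}{2}$, whose continued fraction expansion is $\left[\overline{1},\dots\right]$. Hence $\xi\left(A\right) = \xi\left(B\right) = \left[1\right]$ and $\lambda\left(A\right) = \chi\left(A\right) = \frac{1+\sqrt{5}}{2} = \chi\left(B\right) = \lambda\left(B\right)$, even though $A$ and $B$ are not conjugate: the ``only if'' direction of the Corollary is false as stated, so no proof along these (or any) lines can close your flagged step.

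The root cause is that Theorem~\ref{thmccPSL_2} classifies a hyperbolic class by the \emph{pair} consisting of the trace and the fundamental period, while $\lambda$ retains only the fundamental period; and the period, being by definition the minimal repeating block, cannot detect powers ($\left[\overline{1}\right]$, $\left[\overline{1,1}\right]$ and $\left[\overline{1,1,1,1}\right]$ are the same infinite continued fraction, i.e.\ the same real number). The paper's own proof has precisely this gap: the assertion that completeness ``is a reformulation of Theorem~\ref{thmccPSL_2}'' silently discards the trace, so your scruple has uncovered an error in the statement rather than a hole peculiar to your argument. The repair is straightforward and preserves everything else you wrote: in the hyperbolic stratum redefine $\lambda\left(A\right) = \chi\left(A\right) + i\,\text{tr}\left(A\right)$ (or keep the pair $\left(\text{tr}\left(A\right),\chi\left(A\right)\right)$); this value has imaginary part at least $3$, hence collides with none of the values $i$, $\pm\omega$, $s\left(A\right)\in\mathbb{Z}$, and by Theorem~\ref{thmccPSL_2} it now separates hyperbolic classes. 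Since the trace is computable in linear time, the complexity claims survive verbatim. Note that the same defect propagates to Theorem~\ref{mthmc}: the braids $g = \sigma_1\sigma_2^{-1}$ and $h = \left(\sigma_1\sigma_2^{-1}\right)^2$ satisfy $\epsilon\left(g\right) = \epsilon\left(h\right) = 0$ and $\mu\left(g\right) = \mu\left(h\right)$, yet they are not conjugate in $B_3$ because their images $A$ and $A^2$ are not conjugate in $\text{PSL}_2\left(\mathbb{Z}\right)$; the same modification of $\lambda$ repairs $\mu$ as well.
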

\begin{proof}
The statement that $\lambda$ is a complete class invariant is a reformulation of Theorem~\ref{thmccPSL_2}. We show that $\lambda\left(A\right)$ can be computed in linear time as a function of the word length of $A$ in the generating set $\{S,T\}$ of $\text{PSL}_2\left(\mathbb{Z}\right)$. Of course, the entries of $A$ can be computed in linear time. In particular, the trace $\text{tr}\left(A\right)$ can be computed in linear time. Furthermore, $\lambda\left(A\right)$ can be computed in linear time if $\text{tr}\left(A\right)\leq 2$ by the definition of $\lambda\left(A\right)$ in this case. Finally, if $\text{tr}\left(A\right)>2$, then we will show that the fundamental period $\xi\left(A\right)$ can be computed in linear time as a function of the word length of $A$. 

Indeed, if $A = S^{a_1}T^{b_1}\dots S^{a_n}T^{b_n}$ with $a_1\geq 0$, $b_n\leq 0$, $a_i>0$ for $2\leq i\leq n$, and $b_i<0$ for $1\leq i\leq n-1$, then the infinite purely periodic continued fraction $\left[\overline{a_1,-b_1,\dots,a_n,-b_n},\dots\right]$ is a fixed point of $A$ if $a_1,b_n\neq 0$, and the infinite periodic continued fraction $\left[0,\overline{-b_1,a_2,\dots,-b_{n-1},a_n},\dots\right]$ is a fixed point of $A$ if $a_1 = 0 = b_n$. On the other hand, if $a_1 = 0$ and $b_n\neq 0$, then the infinite periodic continued fraction $\left[0,-b_1,\overline{a_2,-b_1,\dots,a_n,-b_n-b_1},\dots\right]$ is a fixed point of $A$, and if $a_1\neq 0$ and $b_n = 0$, then the infinite periodic continued fraction $\left[a_1,\overline{-b_1,a_2,\dots,-b_{n-1},a_n+a_1},\dots\right]$ is a fixed point of $A$. In all of these cases, the fundamental period $\xi\left(A\right)$ can be computed in linear time as a function of the word length of $A$. In general, if $A$ is expressed as a reduced word in the generating set $\{ST,STS\}$, then a conjugate of $A$ in the special form just discussed must be a cyclic shift of $A$, and a cyclic shift in this special form can be determined in linear time as a function of the word length of $A$. Indeed, we use the solution to the word problem in $\text{PSL}_2\left(\mathbb{Z}\right)$ outlined immediately after the proof of Theorem~\ref{thmccPSL_2}, and we observe that a word in the generating set $\{ST,STS\}$ is in the special form just discussed if and only if it begins with an $ST$ and ends with an $STS$. Therefore, the fundamental period $\xi\left(A\right)$ can be determined in linear time as a function of the word length of $A$ if $\text{tr}\left(A\right)>2$.
\end{proof}

\subsection{The relationship between conjugation in $B_3$ and conjugation in $\text{PSL}_2\left(\mathbb{Z}\right)$}
\label{subseccpBP}

In this subsection, we establish a linear time algorithm to solve the conjugacy problem in $B_3$. The algorithm is based on the linear time algorithm to solve the conjugacy problem in $\text{PSL}_2\left(\mathbb{Z}\right)$ in Subsection~\ref{subseccpP} and the fundamental short exact sequence \[1\to \left\langle \Delta^2 \right\rangle \to B_3\to \text{PSL}_2\left(\mathbb{Z}\right)\to 1\] in Proposition~\ref{propB_3PSL_2}. Let $\mathbb{C}\left(x\right)$ denote the field of rational functions in one variable over $\mathbb{C}$. Let $\mu:B_3\to \mathbb{C}\left(x\right)$ be defined by the rule $\mu\left(g\right) = \lambda\left(g\right)x^{\epsilon\left(g\right)}$ for $g\in B_3$, where $\lambda:\text{PSL}_2\left(\mathbb{Z}\right)\to \mathbb{C}$ is the complete class invariant introduced in Subsection~\ref{subseccpP} (see the discussion preceding Corollary~\ref{corcPSLlt}) and $\epsilon:B_3\to \mathbb{Z}$ is the abelianization homomorphism. 

\begin{theorem}
The class function $\mu:B_3\to \mathbb{C}\left(x\right)$ is a complete class invariant, i.e., $g,h\in B_3$ are conjugate in $B_3$ if and only if $\mu\left(g\right) = \mu\left(h\right)$. If $g\in B_3$, then $\mu\left(g\right)$ can be computed in running time $\mathcal{O}\left(L\right)$ if the word length of $g$ is $L$. In particular, the conjugacy problem in the braid group $B_3$ can be solved in linear time.
\end{theorem}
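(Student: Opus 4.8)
The plan is to reduce the conjugacy problem in $B_3$ to the already-solved conjugacy problem in $\text{PSL}_2(\mathbb{Z})$ (Corollary~\ref{corcPSLlt}) together with the single extra datum $\epsilon$, exploiting the fundamental short exact sequence of Proposition~\ref{propB_3PSL_2}. First I would dispatch the easy implication by checking that $\mu$ is a class function. Since $\epsilon:B_3\to\mathbb{Z}$ is a homomorphism into an abelian group, $\epsilon$ — and hence $x^{\epsilon(g)}$ — is constant on conjugacy classes. Since $\phi:B_3\to\text{PSL}_2(\mathbb{Z})$ is a homomorphism, it carries each conjugacy class of $B_3$ into a conjugacy class of $\text{PSL}_2(\mathbb{Z})$, and $\lambda$ is a class function on $\text{PSL}_2(\mathbb{Z})$ by Corollary~\ref{corcPSLlt}; hence $\lambda(g):=\lambda(\phi(g))$ is constant on conjugacy classes. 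Therefore $\mu(g)=\lambda(g)x^{\epsilon(g)}$ is a class function, giving $g\sim h\Rightarrow\mu(g)=\mu(h)$.

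The crux is the converse, and I would isolate it as the structural lemma
\[
g,h\in B_3\text{ are conjugate} \iff \phi(g)\sim\phi(h)\text{ in }\text{PSL}_2(\mathbb{Z})\text{ and }\epsilon(g)=\epsilon(h).
\]
The forward direction is the class-function computation above. For the converse, suppose $\phi(g)\sim\phi(h)$ and $\epsilon(g)=\epsilon(h)$. Since $\phi$ is surjective, lift a conjugating element to $k\in B_3$ so that $\phi(kgk^{-1})=\phi(h)$; then $kgk^{-1}h^{-1}\in\ker\phi=\langle\Delta^2\rangle$ by Proposition~\ref{propB_3PSL_2}, say $kgk^{-1}=\Delta^{2j}h$. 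Applying $\epsilon$ and using $\epsilon(\Delta^2)=6$ together with $\epsilon(kgk^{-1})=\epsilon(g)=\epsilon(h)$ forces $6j=0$, so $j=0$ and $kgk^{-1}=h$. The decisive point is that $\epsilon$ restricts to an \emph{injection} on the central subgroup $\langle\Delta^2\rangle$, which is precisely what resolves the central ambiguity built into the extension.

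It then remains to translate the lemma into a statement about $\mu$. By Corollary~\ref{corcPSLlt}, $\phi(g)\sim\phi(h)$ if and only if $\lambda(g)=\lambda(h)$, so the lemma reads: $g\sim h$ iff $\lambda(g)=\lambda(h)$ and $\epsilon(g)=\epsilon(h)$. When $\lambda(g)\neq 0$ this is equivalent to the equality of monomials $\lambda(g)x^{\epsilon(g)}=\lambda(h)x^{\epsilon(h)}$ in $\mathbb{C}(x)$, because a nonzero Laurent monomial determines both its coefficient and its exponent. The one delicate case is $\lambda(g)=0$, which by inspection of the definition of $\lambda$ occurs exactly when $\phi(g)$ is trivial, i.e.\ $g\in\langle\Delta^2\rangle$ is central; there the monomial $\mu(g)$ collapses to $0$ and the exponent is no longer read off from $\mu$, so central braids must be handled directly, distinguished by $\epsilon$ via the same injectivity of $\epsilon|_{\langle\Delta^2\rangle}$. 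This degenerate locus, and the verification that the structural lemma completely absorbs the central indeterminacy of the extension, is where I expect the genuine work to lie; the monomial comparison is formal.

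For the running time, $\epsilon(g)=\sum_i(a_i+b_i)$ is a single sum computable in $\mathcal{O}(L)$, while $\lambda(g)=\lambda(\phi(g))$ is computable in $\mathcal{O}(L)$ by Corollary~\ref{corcPSLlt} — the entries and trace of $\phi(g)$ are linear-time, and when $\text{tr}(g)>2$ its fundamental period (and hence $\chi(g)$) is extracted in linear time. Thus $\mu(g)$ is computed in $\mathcal{O}(L)$, and comparing $\mu(g)$ with $\mu(h)$ solves the conjugacy problem in linear time.
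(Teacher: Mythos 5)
Your reduction is the same one the paper uses: the fundamental short exact sequence (Proposition~\ref{propB_3PSL_2}), the completeness of $\lambda$ (Corollary~\ref{corcPSLlt}), and the injectivity of $\epsilon$ on $\left\langle \Delta^2\right\rangle$. Your structural lemma --- $g\sim h$ in $B_3$ if and only if $\phi\left(g\right)\sim\phi\left(h\right)$ in $\text{PSL}_2\left(\mathbb{Z}\right)$ and $\epsilon\left(g\right)=\epsilon\left(h\right)$ --- proved by lifting a conjugator through $\phi$ and killing the central defect $\Delta^{2j}$ with $\epsilon$, is exactly the content of the paper's argument, and your complexity analysis agrees with the paper's as well.

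The place where you part ways with the paper is the ``delicate case'' $\lambda\left(g\right)=0$, and here you are in fact more careful than the paper, but your proposed fix (``central braids must be handled directly, distinguished by $\epsilon$'') cannot be carried out, because the statement itself fails there. As you observe, $\lambda\left(g\right)=0$ happens exactly when $\phi\left(g\right)$ is the identity: trace $2$ with $s=0$ means conjugate to the identity, hence equal to it, so this locus is exactly $\ker\phi=\left\langle\Delta^2\right\rangle$. For every such $g$, $\mu\left(g\right)=0\cdot x^{\epsilon\left(g\right)}$ is the zero element of the field $\mathbb{C}\left(x\right)$; thus $\mu\left(1\right)=\mu\left(\Delta^2\right)=\mu\left(\Delta^4\right)=0$, although these are pairwise non-conjugate (distinct central) braids. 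So $\mu$ is constant on the center and cannot distinguish anything there, and the first biconditional of the paper's own proof (``$\mu\left(g\right)=\mu\left(h\right)$ if and only if $\phi\left(g\right)\sim\phi\left(h\right)$ and $\epsilon\left(g\right)=\epsilon\left(h\right)$'') is false precisely when $g$ and $h$ are distinct central elements --- a gap the paper does not address. Your argument (and the paper's) does prove the theorem whenever $g,h$ are not both in $\left\langle\Delta^2\right\rangle$; to repair the degenerate case one must change the invariant rather than the proof, e.g.\ take $\mu\left(g\right)$ to be the ordered pair $\left(\lambda\left(g\right),\epsilon\left(g\right)\right)$ --- which is literally your structural lemma --- or redefine $\lambda$ on parabolic classes via an injection of $\mathbb{Z}$ into $\mathbb{C}\setminus\{0\}$ avoiding the elliptic and hyperbolic values (for instance $s\mapsto s+\tfrac{1}{2}$). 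In short: your lemma is the correct complete invariant; the defect, shared by the paper, lies solely in encoding the pair as a product, which is lossy exactly when the first coordinate vanishes.
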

\begin{proof}
Let $g,h\in B_3$ and consider the surjective homomorphism $\phi:B_3\to \text{PSL}_2\left(\mathbb{Z}\right)$. Theorem~\ref{thmccPSL_2} implies that $\mu\left(g\right) = \mu\left(h\right)$ if and only if $\phi\left(g\right)$ and $\phi\left(h\right)$ are conjugate in $\text{PSL}_2\left(\mathbb{Z}\right)$ and $\epsilon\left(g\right) = \epsilon\left(h\right)$. However, $\phi\left(g\right)$ and $\phi\left(h\right)$ are conjugate in $\text{PSL}_2\left(\mathbb{Z}\right)$ if and only if $g$ is conjugate to $\Delta^{2k}h$ in $B_3$ for some $k\in \mathbb{Z}$ by the fundamental short exact sequence (Proposition~\ref{propB_3PSL_2}). Furthermore, if $g$ is conjugate to $\Delta^{2k}h$, then $k = \frac{\epsilon\left(g\right) - \epsilon\left(h\right)}{6}$ since $\epsilon:B_3\to\mathbb{Z}$ is a class function. In particular, $k=0$ if and only if $\epsilon\left(g\right) = \epsilon\left(h\right)$. Therefore, $g,h\in B_3$ are conjugate in $B_3$ if and only if $\mu\left(g\right) = \mu\left(h\right)$.

The statement that $\mu\left(g\right)$ can be computed in running time $\mathcal{O}\left(L\right)$ if the word length of $g$ is $L$, follows from the analogous statement concerning $\lambda\left(\phi\left(g\right)\right)$ in Corollary~\ref{corcPSLlt}, and the statement that $\epsilon\left(g\right)$ can be computed in running time $\mathcal{O}\left(L\right)$.
\end{proof}

Finally, we show that the braid group $B_3$ is conjugacy-separable. A group $G$ is \textit{conjugacy separable} if for each pair of non-conjugate elements $g,h\in G$, there is a finite index normal subgroup $N\subseteq G$ (depending on $g$ and $h$), such that the images of $g$ and $h$ in $G/N$ are non-conjugate in $G/N$. The conjugacy separability of the fundamental groups of compact oriented Seifert-fibered $3$-manifolds has already been established~\cite{allenbyconsep}. In particular, it is known that the braid group $B_3$ is conjugacy separable, since it is the fundamental group of the trefoil knot complement. However, we give a new and simpler proof that $B_3$ is conjugacy separable in the spirit of this paper.

\begin{theorem}
The braid group $B_3$ is conjugacy separable. 
\end{theorem}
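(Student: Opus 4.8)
The plan is to exploit the characterization of conjugacy in $B_3$ that falls out of the proof of the preceding theorem, namely that $g,h\in B_3$ are conjugate if and only if $\phi\left(g\right)$ and $\phi\left(h\right)$ are conjugate in $\text{PSL}_2\left(\mathbb{Z}\right)$ \emph{and} $\epsilon\left(g\right)=\epsilon\left(h\right)$. Thus, if $g$ and $h$ are non-conjugate, then either $\epsilon\left(g\right)\neq \epsilon\left(h\right)$ or $\phi\left(g\right)$ and $\phi\left(h\right)$ are non-conjugate in $\text{PSL}_2\left(\mathbb{Z}\right)$, and I would produce a separating finite quotient of $B_3$ in each case. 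The first case is handled by the abelianization alone: composing the homomorphism $\epsilon:B_3\to \mathbb{Z}$ with reduction modulo $N$ gives a homomorphism onto the finite abelian group $\mathbb{Z}/N\mathbb{Z}$; taking any $N>\left|\epsilon\left(g\right)-\epsilon\left(h\right)\right|$ makes the images of $g$ and $h$ distinct, and in an abelian group distinct elements are non-conjugate.

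For the second case it is enough to prove that $\text{PSL}_2\left(\mathbb{Z}\right)$ is conjugacy separable: a finite quotient $q:\text{PSL}_2\left(\mathbb{Z}\right)\to Q$ separating the conjugacy classes of $\phi\left(g\right)$ and $\phi\left(h\right)$ pulls back along $\phi$ to the finite quotient $q\circ\phi:B_3\to Q$ separating $g$ and $h$. I would prove the conjugacy separability of $\text{PSL}_2\left(\mathbb{Z}\right)$ from its structure as the free product $\mathbb{Z}/2\mathbb{Z}\ast \mathbb{Z}/3\mathbb{Z}$ (Proposition~\ref{propfpPSL_2}) together with the conjugacy classification of Theorem~\ref{thmccPSL_2}. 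Two non-conjugate elements of different trace, or the finitely many elliptic classes, are comparatively easy to separate using residual finiteness, for instance by arranging that their images have distinguishing orders. The substantive content lies in the two infinite families: parabolics, indexed by the translation parameter $s$, and hyperbolics, indexed by the trace together with the fundamental period $\xi$.

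The main obstacle is the hyperbolic case---separating two hyperbolic transformations of equal trace but distinct fundamental periods. Under the free product identification these are cyclically reduced words in $\left\{ST,STS\right\}$ that are not cyclic permutations of one another, and they cannot in general be separated by the congruence quotients $\text{PSL}_2\left(\mathbb{Z}/n\mathbb{Z}\right)$: conjugacy classes of a fixed trace $\text{tr}$ correspond to ideal classes in the order of discriminant $\text{tr}^2-4$, and genus theory shows that these classes are not all detected by congruence conditions. I would therefore invoke \emph{non-congruence} finite quotients arising from the action of $\mathbb{Z}/2\mathbb{Z}\ast \mathbb{Z}/3\mathbb{Z}$ on its Bass--Serre tree: given cyclically reduced words that are not cyclic shifts, one constructs a finite quotient in which their cyclic normal forms persist and remain inequivalent. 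This is precisely the classical fact that a free product of finitely many finite groups is conjugacy separable, and the continued-fraction description of $\xi\left(A\right)$ is what pins down the cyclic word that the finite quotient must preserve.
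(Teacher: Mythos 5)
Your proposal is correct and follows essentially the same route as the paper: both reduce the problem to the conjugacy separability of $\text{PSL}_2\left(\mathbb{Z}\right)$, pulled back along $\phi$, together with finite cyclic quotients of the abelianization $\epsilon$ to detect the central factor $\Delta^{2k}$ --- the paper merely organizes this as a proof by contradiction while you argue the contrapositive via the conjugacy criterion $\phi\left(g\right)\sim\phi\left(h\right)$ and $\epsilon\left(g\right)=\epsilon\left(h\right)$. The only substantive difference is sourcing: the paper cites Stebe for the conjugacy separability of $\text{PSL}_2\left(\mathbb{Z}\right)$, whereas you propose deriving it from the free product structure $\mathbb{Z}/2\mathbb{Z}\ast\mathbb{Z}/3\mathbb{Z}$ (your observation that congruence quotients cannot separate hyperbolic classes within a genus is correct but tangential, since you ultimately invoke the classical free-product fact as a black box, just as the paper invokes its reference).
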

\begin{proof}
Let us assume, for a contradiction, that $g,h\in B_3$ are non-conjugate elements but that for each finite index normal subgroup $N\subseteq B_3$, the images of $g$ and $h$ are conjugate in $G/N$. In particular, $\phi\left(g\right)$ and $\phi\left(h\right)$ are conjugate in $\text{PSL}_2\left(\mathbb{Z}\right)$ since the group $\text{PSL}_2\left(\mathbb{Z}\right)$ is conjugacy separable~\cite{stebeconsep}. The fundamental short exact sequence (Proposition~\ref{propB_3PSL_2}) implies that $g$ and $\Delta^{2k}h$ are conjugate in $B_3$ for some $k\in \mathbb{Z}$. 

However, if $\epsilon_N:B_3\to \mathbb{Z}/N\mathbb{Z}$ denotes the reduction of $\epsilon:B_3\to \mathbb{Z}$ modulo $N$, then $\epsilon_N\left(g\right) = \epsilon_N\left(h\right)$ for all $N\in \mathbb{Z}$ since $\mathbb{Z}/N\mathbb{Z}$ is abelian. In particular, $k = \frac{\epsilon\left(g\right) - \epsilon\left(h\right)}{6}$ is divisible by every positive integer, and we deduce that $k=0$. Therefore, $g$ and $h$ are conjugate in $B_3$ and this contradiction completes the proof. 
\end{proof}

\bibliography{bibresearchstatement}
\bibliographystyle{plain}
\end{document}